\UseRawInputEncoding %arxiv
\documentclass[11pt,a4paper]{article}
\usepackage{amsfonts}
\usepackage{amssymb}
\usepackage{mathrsfs}
\usepackage{amsmath}
\usepackage{booktabs}
\usepackage{epsf,epsfig,amsfonts,amsgen,indentfirst}
\usepackage{amsmath,amstext,amsbsy,amsopn,amsthm,bbding,wasysym}
\usepackage{multicol,mathdots}
\usepackage{subfigure}
\usepackage[numbers,sort&compress]{natbib}
\allowdisplaybreaks

\newtheorem{theorem}{Theorem}[section]
\newtheorem{corollary}{Corollary}[section]

\newtheorem{lemma}[theorem]{Lemma}

\newtheorem{definition}{Definition}[section]
\newtheorem{remark}{Remark}[section]

\newtheorem{conjecture}[theorem]{Conjecture}

\begin{document}
\title
{\LARGE \textbf{Orlik--Solomon algebras determine the Tutte polynomials of matroids of rank at most three
\thanks{Supported by NSFC (Nos. 12261071,  12571019) and NSF of Qinghai Province (No. 2025-ZJ-902T).} }}

\author{Jianxuan Luo$^a$, Tingzeng Wu$^{b,c}$\thanks{{Corresponding author.\newline
\emph{E-mail addresses}: mathtzwu@163.com, jianxuanluo@163.com, hjlai2015@hotmail.com.
}}, Hong-Jian Lai$^{d}$ \\
{\small $^{a}$ School of Mathematical Sciences, Xiamen University,}\\
{\small  Xiamen, Fujian 361005,  P.R.~China} \\
{\small $^{b}$ School of Mathematics and Statistics, Qinghai Minzu University, }\\
{\small  Xining, Qinghai 810007, P.R.~China} \\
{\small $^{c}$ Qinghai Institute of Applied Mathematics, Xining, Qinghai 810007, P.R.~China}\\
{\small $^{d}$ School of Mathematics and System Sciences, Guangdong Polytechnic Normal University,}\\
{\small  Guangzhou, Guangdong 510665, P.R.~China}}
\date{}

\maketitle
\noindent {\bf Abstract:}
Let $\mathcal{M}$ be a simple matroid of rank $r$ on the ground set $[n]=\{1,\ldots,n\}$, 
let $T_{\mathcal{M}}(x,y)$ denote its Tutte polynomial, 
and let $\mathrm{OS}(\mathcal{M})$ denote its Orlik--Solomon $\mathbb{C}$-algebra. 
We say that $\mathrm{OS}(\mathcal{M})$ determines $T_{\mathcal{M}}(x,y)$ if, 
for every simple matroid $\mathcal{N}$, 
an isomorphism $\mathrm{OS}(\mathcal{M})\cong\mathrm{OS}(\mathcal{N})$ of $\mathbb{C}$-algebras, 
without regard to the grading, implies $T_{\mathcal{M}}(x,y)=T_{\mathcal{N}}(x,y)$. 
Falk conjectured that the Orlik--Solomon $\mathbb{C}$-algebra 
of every simple matroid of rank three determines its Tutte polynomial. 
We prove that this holds for every simple matroid of rank at most three, 
thereby resolving Falk's conjecture.

%\smallskip
\noindent {\bf Keywords:} matroid; Orlik--Solomon algebra; Tutte polynomial \\
\noindent {\bf AMS subject classifications:} 05B35; 52C35; 16W50

\section{Introduction}

We adopt the notation of Cordovil and Forge~\cite{CordovilForge2003}.
Every matroid appearing in this paper is finite. For each integer
$n\geq 0$, set $[n]=\{1,\ldots,n\}$, with $[0]=\varnothing$.
A matroid is termed \emph{simple} when it contains neither loops nor
parallel elements. Except where explicitly stated otherwise, $\mathcal{M}$
stands for a simple matroid of rank $r$ whose ground set is $[n]$.
Let $\operatorname{rk}_{\mathcal{M}}$ be its rank function.
For $A\subseteq[n]$, the rank $\operatorname{rk}_{\mathcal{M}}(A)$
is defined as the largest cardinality of an independent subset of $A$,
and $r=\operatorname{rk}_{\mathcal{M}}([n])$.
Denote by $\mathfrak{C}(\mathcal{M})$ and $\mathfrak{B}(\mathcal{M})$ the sets
of circuits (that is, minimal dependent sets under inclusion) and bases of $\mathcal{M}$, respectively.
A matroid on $[n]$ is \emph{realizable over a field $\mathbb{K}$} if there
are vectors $v_1,\ldots,v_n$ in a vector space over $\mathbb{K}$ such
that $A\subseteq[n]$ is independent in the matroid exactly when
the vectors $v_i$, $i\in A$, are linearly independent over $\mathbb{K}$.
For further matroid terminology, see~\cite{Oxley2011}.

The Tutte polynomial of $\mathcal{M}$ is defined by the following
subset expansion (see~\cite{BrylawskiOxley1992}):
\begin{equation}\label{eq:tutte-subset-expansion}
T_{\mathcal{M}}(x,y)=\sum_{X\subseteq[n]}(x-1)^{r-\operatorname{rk}_{\mathcal{M}}(X)}(y-1)^{|X|-\operatorname{rk}_{\mathcal{M}}(X)}.
\end{equation}
For further work on Tutte polynomials of graphs and matroids, 
see \cite{BergetEurSpinkTseng2023,BernardiKalmanPostnikov2022,BrandenMoci2014,BrylawskiOxley1992,ColbournProvanVertigan1995,KookReinerStanton1999}.

Let $E=\{e_1,e_2,\ldots,e_n\}$ be a fixed basis of the vector space $\mathbb{C}^n$ and define $\mathcal{E}_0=\mathbb C$, $\mathcal{E}_{\ell}
=\sum \alpha_i e_{i_1}\cdots e_{i_\ell} (\alpha_i\in\mathbb{C})$.
Consider the \emph{graded exterior algebra} of the
vector space $\bigoplus_{i=1}^{n}\mathbb{C}e_i$:
\begin{equation*}
\begin{aligned}
\mathcal{E}
&= \bigwedge\left(\bigoplus_{i=1}^{n}\mathbb{C}e_i\right)\\
&= \mathcal{E}_0\oplus\cdots\oplus
\mathcal{E}_{\ell}\oplus\cdots\oplus\mathcal{E}_n.
\end{aligned}
\end{equation*}
When necessary, we regard the subset $X = \{i_1, i_2, .., i_{\ell}\} \subset [n]$ as the ordered set $i_1<\cdots<i_\ell$.
Set $e_X=e_{i_1}\wedge e_{i_2}\wedge\cdots\wedge e_{i_\ell}$. In particular, set $e_{ij}=e_i\wedge e_j$.
By convention, set $e_{\emptyset}=1$. Let $\partial:\mathcal{E}\to\mathcal{E}$ be the unique morphism such that
$\partial(e_i\wedge e_j)=\partial(e_i)\wedge e_j-e_i\wedge\partial(e_j)$ and $\partial(e_i)=1$, for all $e_i,e_j\in E$ and $n\in\mathbb{N}$.
Let $\Im(\mathcal{M})=\langle\partial(e_C):C\in\mathfrak{C}(\mathcal{M})\rangle$ 
be the two-sided ideal of $\mathcal{E}$ generated by $\{\partial(e_C):C\in\mathfrak{C}(\mathcal{M})\}$.

\begin{definition}
The quotient  $\mathcal{E}/\Im(\mathcal{M})$ is the Orlik--Solomon $\mathbb{C}$-algebra of $\mathcal{M}$, 
denoted by $\mathrm{OS}(\mathcal{M})$.
\end{definition}

%Orlik--Solomon algebras have been extensively studied, see \cite{Falk1993,Falk2001,OrlikSolomon1980,SheltonYuzvinsky1997,Yuzvinsky2001}.
%The residue class in $\mathrm{OS}(\mathcal{M})$ determined by the element $e_X\in\mathcal{E}$ is denoted by $e_X$ as well.
%The algebra $\mathrm{OS}(\mathcal{M})$ has the natural graduation
%\begin{equation*}
%\mathrm{OS}(\mathcal{M})=\mathbb{C}\oplus\mathrm{OS}_1(=\mathcal{E}_1)\oplus\cdots\oplus\mathrm{OS}_{\ell}
%\left(=\mathcal{E}_{\ell}/\Im(\mathcal{M})\cap\mathcal{E}_{\ell}\right)\oplus\cdots\oplus\mathrm{OS}_r.
%\end{equation*}

Orlik--Solomon algebras have been investigated in great depth;
see~\cite{Falk1993,Falk2001,OrlikSolomon1980,
SheltonYuzvinsky1997,Yuzvinsky2001}, among others. For notational convenience, 
the residue class of $e_X\in\mathcal{E}$ in
$\mathrm{OS}(\mathcal{M})$ is still written as $e_X$.
Since $\Im(\mathcal{M})$ is homogeneous,
$\mathrm{OS}(\mathcal{M})$ inherits the grading
\begin{equation*}
\mathrm{OS}(\mathcal{M})
=\bigoplus_{p\geq 0}\mathrm{OS}_p(\mathcal{M}),
\quad
\mathrm{OS}_p(\mathcal{M})
=\mathcal{E}_p/\bigl(\Im(\mathcal{M})\cap\mathcal{E}_p\bigr).
\end{equation*}
Because $\mathcal{M}$ is simple, its homogeneous components
of degrees $0$ and $1$ are naturally identified with
$\mathbb{C}$ and $\mathcal{E}_1$, respectively.
We set $\mathrm{OS}_p(\mathcal{M})=0$ for $p<0$.
We say that $\mathrm{OS}(\mathcal{M})$ determines $T_{\mathcal{M}}(x,y)$ if, for
every finite simple matroid $\mathcal{N}$, an isomorphism
$\mathrm{OS}(\mathcal{M})\cong\mathrm{OS}(\mathcal{N})$ of
$\mathbb{C}$-algebras, without regard to the grading, implies
$T_{\mathcal{M}}(x,y)=T_{\mathcal{N}}(x,y)$.
For the problem of whether the Orlik--Solomon algebra determines the Tutte
polynomial, see~\cite{Falk1997,EschenbrennerFalk1999,CordovilForge2003}.

Falk~\cite{Falk1997} proposed the following conjecture.

\begin{conjecture}(\cite{Falk1997})\label{conj:Falk}
Let $\mathcal{M}$ be a simple matroid of rank three. Then
$\mathrm{OS}(\mathcal{M})$ determines $T_{\mathcal{M}}(x,y)$.
\end{conjecture}

This conjecture had remained unresolved. Cordovil and Forge \cite{CordovilForge2003}
showed that $\mathrm{OS}(\mathcal{M})$ determines $T_{\mathcal{M}}(x,y)$ for
every simple matroid $\mathcal{M}$ of rank at most three that is realizable over
$\mathbb{R}$. By contrast, Eschenbrenner and Falk \cite{EschenbrennerFalk1999}
showed that, for every integer $r\geq 4$, there are simple matroids
$\mathcal{M}$ and $\mathcal{N}$ of rank $r$ whose Orlik--Solomon algebras are
isomorphic as graded $\mathbb{C}$-algebras but whose Tutte polynomials are
distinct. In this paper, we establish the following theorem.

\begin{theorem}\label{thm:main}
Let $\mathcal{M}$ be a simple matroid of rank at most three. Then
$\mathrm{OS}(\mathcal{M})$ determines $T_{\mathcal{M}}(x,y)$.
\end{theorem}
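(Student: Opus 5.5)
We will show that the ungraded $\mathbb{C}$-algebra $A=\mathrm{OS}(\mathcal{M})$ determines the numbers $n$, $r$ and, when $r=3$, the multiset of sizes of the rank-two flats (the lines) of $\mathcal{M}$. This suffices. A simple matroid of rank at most three is determined up to the data needed for $T_{\mathcal{M}}$ by these numbers: in rank $\le 2$ the matroid is $U_{r,n}$, and in rank $3$ the subset expansion \eqref{eq:tutte-subset-expansion} depends only on $n$ and the line sizes. Indeed, every $X$ with $|X|\le 2$ is independent. A set with $|X|\ge 3$ has rank $2$ exactly when it lies inside a single line $L$, so it contributes $\binom{|L|}{k}$ for each $k\ge 3$. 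Every other $X$ with $|X|\ge 3$ has rank $3$.

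\textbf{Step 1: recover invariants from the ungraded structure.} The algebra $A$ is finite-dimensional, local and graded-connected, with maximal ideal $\mathfrak{m}=\bigoplus_{p\ge1}\mathrm{OS}_p$. The powers $\mathfrak{m}^k=\bigoplus_{p\ge k}\mathrm{OS}_p$ are intrinsic, since $\mathrm{OS}$ is generated in degree one. Hence the associated graded algebra $\mathrm{gr}_{\mathfrak{m}}A=\bigoplus\mathfrak{m}^k/\mathfrak{m}^{k+1}\cong \mathrm{OS}(\mathcal{M})$ as a graded algebra. This recovers $n=\dim\mathfrak{m}/\mathfrak{m}^2$, the rank $r$ as the top nonvanishing degree, and the Hilbert series, i.e.\ the characteristic polynomial. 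So we may assume a graded isomorphism, and in fact only its quadratic part $A_1\times A_1\to A_2$ will matter.

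\textbf{Step 2: detect the lines via resonance.} For rank $3$ we use the first resonance variety
\[
R^1(A)=\{a\in A_1:\ \exists\, b\in A_1\setminus\mathbb{C}a,\ ab=0\}.
\]
It is defined from the multiplication $A_1\times A_1\to A_2$ alone, so it is an invariant of the graded algebra up to $\mathrm{GL}(A_1)$. Its irreducible components, their dimensions, and the pattern of their pairwise intersections are therefore invariants. Each line $L$ with $|L|\ge3$ gives a \emph{local} component $\{\sum_{i\in L}a_ie_i:\sum a_i=0\}$ of dimension $|L|-1$. For arbitrary (possibly non-realizable) matroids one must also handle non-local components coming from multinets, and this is where the argument departs from Cordovil--Forge. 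The plan is to avoid classifying components. Instead, we count, for each $k\ge2$, the \emph{local} data intrinsically: a vector $a\in A_1$ whose annihilator $\{b: ab=0\}$ in $A_1$ has dimension exactly $k$, together with the dimension of the subspace $a\cdot A_1\subseteq A_2$. For a generic point of a local component of a line of size $m$, the annihilator is spanned by the component and has dimension $m-1$. The product $a\cdot A_1$ then has dimension $n-(m-1)$, computable from the nbc basis of $A_2$. We will verify that for non-local (multinet) components the pair (annihilator dimension, $\dim aA_1$) takes different values; multinet components always have dimension $2$ or $3$ and sit inside $\sum_{\text{blocks}}$, with rank of multiplication computable.

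\textbf{Main obstacle.} It remains to turn the intrinsic counts into line counts. Let $\ell_m$ be the number of lines of size $m$. Step 1 gives $\sum_m \ell_m\binom{m-1}{2}$, from the dimension of $\mathrm{OS}_2$, and $\ell_m$ for large $m$ via top-dimensional components. We expect to need an inductive argument. Proceeding from the largest $m$ downward, we count components of dimension $m-1\ge 2$ and subtract the possible non-local ones of dimension $2,3$. The delicate cases are $m=3,4$, where multinet components have the same dimension. For these we plan to use a refined invariant: the dimension of $A_2/(\text{span of } aA_1 \text{ over } a\in\text{component})$. It equals $\binom{m-1}{2}$-type corrections for local components but differs for multinet ones because their support meets several lines. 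Combining this with the known value of $\sum\ell_m\binom{m-1}{2}$ should pin down $\ell_3$ and $\ell_4$. The remaining risk is precisely this separation of local from non-local components in low dimension for non-realizable matroids.
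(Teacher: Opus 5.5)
Your reduction of $T_{\mathcal{M}}$ to $n$, $r$ and the numbers $c_p(\mathcal{M})$ matches the paper (Lemma~\ref{lem:Tutte} plus the uniform case). Your Step~1 is a correct, self-contained substitute for the Bell--Zhang lemma: an ungraded isomorphism preserves the Jacobson radical $\mathfrak{m}$ and its powers, so it induces a graded isomorphism of $\mathrm{gr}_{\mathfrak{m}}$, and $\mathrm{gr}_{\mathfrak{m}}$ is $\mathrm{OS}$ itself.

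The gap is Step~2: you never recover $c_p$, and none of your tools separates local from nonlocal components.
\begin{itemize}
\item Your pair of invariants is redundant. By rank--nullity, $\dim aA_1=n-\dim\mathrm{Ann}_{A_1}(a)$. For a generic nonzero $a$ in any positive-dimensional component $W$, local or not, $\mathrm{Ann}_{A_1}(a)=W$. So both kinds of component give $(\dim W,\,n-\dim W)$.
\item Your refined invariant does not separate them either. For the braid arrangement $M(K_4)$ ($n=6$, $\dim A_2=11$), the four local components and the nonlocal $3$-net component are all $2$-dimensional, and each satisfies $\dim W\cdot A_1=8$, so $\dim A_2/W\cdot A_1=3$ in every case.
\item Counting top-dimensional components does not give the number of largest lines. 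The Hesse arrangement has nine $4$-point lines, but besides the nine local $3$-dimensional components it has a nonlocal $3$-dimensional one coming from its $4$-net.
\item The claim that multinet components have dimension $2$ or $3$ is a theorem about pencils of plane curves, so it holds only for arrangements realizable over $\mathbb{C}$. Falk's conjecture also covers matroids not realizable over $\mathbb{C}$. There, combinatorial $k$-nets exist for arbitrarily large $k$ (e.g.\ from affine planes) and give nonlocal resonance of dimension at least $k-1$. So there is no bounded list of dimensions to ``subtract''.
\end{itemize}

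The missing idea is to stop counting components and use spans instead. Set $F_d(\mathrm{OS})=\sum_{\dim W\geq d}\bigwedge\nolimits^2 W\subseteq\mathcal{E}_2$. A graded isomorphism with degree-one part $g$ maps components to components of the same dimension, so $\bigwedge\nolimits^2 g$ carries $F_d$ onto $F_d$ and $\dim F_d$ is an invariant.

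Now apply the regular-coloring description of an arbitrary component $W$ of dimension $h$ (Lemma~\ref{lem:resonance-structure}). For each rank-two flat $X$, one of two things happens:
\begin{itemize}
\item $X\cap\operatorname{supp}(W)$ meets at most one color class, so $\dim\rho_X(W)\leq 1$ and $\pi_X(\bigwedge\nolimits^2W)=0$; or
\item $X$ meets all $h+1$ color classes, which forces $|X|\geq h+1$.
\end{itemize}
Combined with $\bigwedge\nolimits^2W\subseteq\Im_2(\mathcal{M})=\bigoplus_X\bigwedge\nolimits^2L_X$, this gives $\bigwedge\nolimits^2W\subseteq\bigoplus_{|X|\geq h+1}\Im_2(X)$. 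Thus every nonlocal component is absorbed by local ones of at least the same dimension.

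Hence $F_d=\bigoplus_{|X|\geq d+1}\Im_2(X)$ and $\dim F_d=\sum_{p\geq d+1}\binom{p-1}{2}c_p$. Successive differences recover $c_p$ for $p\geq 3$, and $c_2$ follows from $\binom n2=\sum_p\binom p2 c_p$. In $M(K_4)$, for example, the nonlocal $\bigwedge\nolimits^2W$ lies in the span of the four local relations, so $\dim F_2=4=c_3$ rather than $5$.
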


\begin{remark}
Theorem~\ref{thm:main} resolves Conjecture~\ref{conj:Falk}
and extends the result of Cordovil and
Forge~\cite{CordovilForge2003} to all simple matroids
of rank at most three.
\end{remark}

The paper is structured as follows. Section 2 presents the definitions and
lemmas required for the proof of the main theorem. Section 3 establishes that the
Tutte polynomial of a simple matroid of rank at most three is determined by its
Orlik--Solomon $\mathbb{C}$-algebra.

\section{Preliminary lemmas}

We first introduce several definitions and lemmas used in the proof of the
main theorem.

A subset $F\subseteq[n]$ is called a \emph{flat} of $\mathcal{M}$ if
$\operatorname{rk}_{\mathcal{M}}(F\cup\{j\})
>\operatorname{rk}_{\mathcal{M}}(F)$
for every $j\in[n]\setminus F$.
The flats of $\mathcal{M}$, ordered by inclusion, form a lattice,
denoted by $L=L(\mathcal{M})$ and called the \emph{lattice of flats}
of $\mathcal{M}$.
Let $\mu\colon L\times L\to\mathbb{Z}$ denote the M\"obius function of $L$,
defined recursively by
\begin{equation*}
\mu(X,Y)=
\begin{cases}
1, & X=Y,\\[4pt]
-\displaystyle\sum_{\substack{Z\in L\\X\subseteq Z\subsetneq Y}}
\mu(X,Z), & X\subsetneq Y,\\[8pt]
0, & X\nsubseteq Y,
\end{cases}
\end{equation*}
for all $X,Y\in L$.
Since $\mathcal{M}$ is simple, $\varnothing$ is the least element of $L$,
and $\mu(\varnothing,\varnothing)=1$.
The $p$-th Whitney number of $L$ is defined by
\begin{equation*}
w_p(\mathcal{M})
=(-1)^p
\sum_{\substack{X\in L\\\operatorname{rk}_{\mathcal{M}}(X)=p}}
\mu(\varnothing,X).
\end{equation*}

\begin{lemma}(\cite{Falk2001})\label{lem:OS-dimension}
For every integer $p\geq 0$,
$\dim_{\mathbb{C}}\mathrm{OS}_p(\mathcal{M})=w_p(\mathcal{M})$.
\end{lemma}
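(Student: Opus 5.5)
The plan is to prove the lemma by comparing a combinatorial spanning set of $\mathrm{OS}_p(\mathcal{M})$ with the Whitney numbers, using the broken-circuit theory of Rota and Whitney. I order $[n]$ naturally. A \emph{broken circuit} is a set $C\setminus\{\min C\}$ with $C\in\mathfrak{C}(\mathcal{M})$. An \emph{nbc set} is an independent set containing no broken circuit. The classical broken circuit theorem (Whitney--Rota) states that the number of nbc sets of size $p$ equals $(-1)^p\sum_{\operatorname{rk}(X)=p}\mu(\varnothing,X)=w_p(\mathcal{M})$. It therefore suffices to show that the residue classes $e_S$, with $S$ an nbc set of size $p$, form a basis of $\mathrm{OS}_p(\mathcal{M})$.

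\emph{Spanning.} First I note that $e_D\in\Im(\mathcal{M})$ for every dependent set $D$. Indeed, if $D\supseteq C$ with $C$ a circuit and $i\in C$, then $e_i\,\partial(e_C)=\pm e_C$, so $e_C$ lies in the ideal and hence so does $e_D$. Next let $C$ be a circuit with $i_0=\min C$. Expanding $\partial(e_C)$ expresses $e_{C\setminus\{i_0\}}$ modulo $\Im(\mathcal{M})$ as a signed sum of the monomials $e_{C\setminus\{j\}}$ with $j>i_0$. Each of these index sets contains $i_0$, so it is lexicographically smaller than $C\setminus\{i_0\}$. Multiplying by the remaining factors, any monomial $e_S$ with $S$ containing a broken circuit can be rewritten as a combination of lexicographically smaller monomials, up to dependent monomials, which vanish. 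Since the lexicographic order on $p$-subsets is a well-order, induction shows that the nbc monomials span $\mathrm{OS}_p(\mathcal{M})$. Consequently $\dim\mathrm{OS}_p(\mathcal{M})\le w_p(\mathcal{M})$.

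\emph{Independence.} This is the main obstacle, and I would handle it by deletion--restriction induction on $n$. Let $\mathcal{M}'=\mathcal{M}\setminus n$ and let $\mathcal{M}''$ be the simplification of $\mathcal{M}/n$. I would construct the sequence
\[
0\to\mathrm{OS}(\mathcal{M}')\xrightarrow{\ \iota\ }\mathrm{OS}(\mathcal{M})\xrightarrow{\ \rho\ }\mathrm{OS}(\mathcal{M}'')[-1]\to 0 .
\]
Here $\iota$ is induced by inclusion of generators. The map $\rho$ sends $e_S\wedge e_n$ to the class of $e_S$ in the contraction and kills monomials not involving $n$; one must check that $\rho$ respects the relations $\partial(e_C)$. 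The composite $\rho\circ\iota$ is zero and $\rho$ is surjective. This yields $\dim\mathrm{OS}_p(\mathcal{M})\ge \dim\mathrm{OS}_p(\mathcal{M})/\iota(\mathrm{OS}_p(\mathcal{M}'))\ge\dim\mathrm{OS}_{p-1}(\mathcal{M}'')$, which is not yet the needed bound.

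To get the lower bound, I would argue directly with nbc sets. The nbc sets of $\mathcal{M}$ split into those avoiding $n$, which are exactly the nbc sets of $\mathcal{M}'$, and those containing $n$. The latter correspond bijectively to nbc sets of $\mathcal{M}''$ of size one less, since $n$ is the largest element. This gives the recursion $w_p(\mathcal{M})=w_p(\mathcal{M}')+w_{p-1}(\mathcal{M}'')$. Now suppose $\sum_S c_S e_S=0$ in $\mathrm{OS}(\mathcal{M})$. Applying $\rho$ kills the terms without $n$, and the induction hypothesis for $\mathcal{M}''$ gives $c_S=0$ for every $S\ni n$. The remaining relation lies in $\iota(\mathrm{OS}(\mathcal{M}'))$. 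To conclude, I need $\iota$ to be injective, which I would obtain from the standard retraction argument: the assignment $e_n\mapsto 0$ does not descend to $\mathrm{OS}(\mathcal{M})\to\mathrm{OS}(\mathcal{M}')$. I would instead show injectivity by a dimension count combined with the spanning bound for $\mathcal{M}$. Alternatively, I would invoke the Brieskorn-type decomposition $\mathrm{OS}_p(\mathcal{M})=\bigoplus_{\operatorname{rk}X=p}\mathrm{OS}_p(\mathcal{M}|_X)$, proved via the grading by flats, to reduce to the top degree of each restriction. The injectivity of $\iota$ is where I expect the real work to lie. With it, induction gives independence, and hence $\dim_{\mathbb{C}}\mathrm{OS}_p(\mathcal{M})=w_p(\mathcal{M})$.
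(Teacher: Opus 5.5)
\textbf{There is a genuine gap in the independence half: injectivity of $\iota$ is never actually established.}

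The paper gives no proof of this lemma. It is quoted from Falk, where it follows from the no-broken-circuit basis theorem, which the paper also uses as a black box in the proof of Lemma~\ref{lem:rank}. So your proposal has to stand on its own, and the first half does. The identity $e_C=e_i\,\partial(e_C)$ for $i\in C$ puts every dependent monomial in $\Im(\mathcal{M})$. The straightening of broken circuits strictly lowers the lexicographic order. Whitney's theorem then gives $\dim_{\mathbb{C}}\mathrm{OS}_p(\mathcal{M})\le w_p(\mathcal{M})$. Your $\rho$ is well defined and the nbc recursion is correct. This requires forming $\mathcal{M}''$ from the minimal element of each parallel class of $\mathcal{M}/n$, so that $\rho(e_{T\cup\{n\}})=\pm e_T$ with $T$ nbc in $\mathcal{M}''$.

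The gap is exactly the step you flag, the injectivity of $\iota$, and neither suggested route closes it. A dimension count cannot work. The only information you have about $\mathrm{OS}(\mathcal{M})$ is the spanning upper bound. Combined with surjectivity of $\rho$ and the induction hypothesis, it only yields
\[
\dim_{\mathbb{C}}\iota\bigl(\mathrm{OS}_p(\mathcal{M}')\bigr)\le\dim_{\mathbb{C}}\ker\rho_p\le w_p(\mathcal{M})-w_{p-1}(\mathcal{M}'')=w_p(\mathcal{M}')=\dim_{\mathbb{C}}\mathrm{OS}_p(\mathcal{M}'),
\]
which is compatible with $\iota$ having a kernel. Forcing equality needs $\dim_{\mathbb{C}}\mathrm{OS}_p(\mathcal{M})\ge w_p(\mathcal{M})$, which is the statement being proved, so the argument is circular. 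The flat decomposition $\mathrm{OS}_p(\mathcal{M})=\bigoplus_{\operatorname{rk}X=p}\mathrm{OS}_p(\mathcal{M}|X)$ only localizes the problem. It reduces injectivity of $\iota$ to the same injectivity question in the top degree of the restrictions $\mathcal{M}|X$ with $n\in X$.

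\textbf{The missing ingredient is that $\partial$ descends to $\mathrm{OS}(\mathcal{M})$.} This holds because $\partial\bigl(e_U\,\partial(e_C)\bigr)=\partial(e_U)\,\partial(e_C)\in\Im(\mathcal{M})$. With it you can prove independence of the nbc monomials by induction on rank and bypass $\iota$ altogether.

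By the flat decomposition it suffices to treat the top degree of $\mathcal{N}=\mathcal{M}|X$ with $\operatorname{rk}X=p$. Every nbc basis $B$ of $\mathcal{N}$ contains $i_0=\min X$. Otherwise $B\cup\{i_0\}$ contains a circuit $C$ with $\min C=i_0$ and $C\setminus\{i_0\}\subseteq B$. So a relation has the form $\sum_T c_T\,e_{i_0}e_T=0$. Applying $\partial$ gives $\sum_T c_T\,e_T=e_{i_0}\sum_T c_T\,\partial(e_T)$ in $\mathrm{OS}_{p-1}(\mathcal{N})$.

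Each $e_T$ lies in the summand of the flat $\overline{T}$, which does not contain $i_0$ because $T\cup\{i_0\}$ is independent. Every monomial on the right either vanishes or lies in the summand of a flat containing $i_0$. Project onto the summands of rank-$(p-1)$ flats avoiding $i_0$, and apply the induction hypothesis to the nbc bases $T$ of $\mathcal{N}|\overline{T}$. This yields $c_T=0$.

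Equivalently, the identity $\partial(e_1x)+e_1\partial(x)=x$ makes $(\mathrm{OS}(\mathcal{M}),\partial)$ acyclic. Its Euler characteristic, together with the flat decomposition, induction on rank and the defining recursion of $\mu$, gives $\dim_{\mathbb{C}}\mathrm{OS}_r(\mathcal{M})=(-1)^r\mu(\varnothing,[n])$ directly. Either way, injectivity of $\iota$ becomes a consequence of the nbc basis theorem rather than an input to it.
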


\begin{lemma}(\cite{Oxley2011})\label{lem:fundamental-circuit-exchange}
Let $\mathcal{M}$ be a matroid on the ground set $[n]$,
let $B\in\mathfrak{B}(\mathcal{M})$, and let $c\in[n]\setminus B$.
Then $B\cup\{c\}$ contains a unique circuit $C$, and $c\in C$.
Moreover, for every $b\in C\setminus\{c\}$,
the set $(B\setminus\{b\})\cup\{c\}$ is a basis of $\mathcal{M}$.
\end{lemma}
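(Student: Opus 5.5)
The plan is to use only the circuit axioms and the fact that all bases of a matroid have the same cardinality. The argument has two parts: existence and uniqueness of the circuit $C$, then the exchange property.

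\emph{Existence.} Since $B$ is a basis, it is a maximal independent set. Hence $B\cup\{c\}$ is dependent, and so it contains a minimal dependent subset, that is, a circuit $C$. This $C$ must contain $c$, since otherwise $C\subseteq B$ would be a dependent subset of an independent set.

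\emph{Uniqueness.} Suppose $C_1\neq C_2$ are two circuits contained in $B\cup\{c\}$. By the argument just given, both contain $c$. The circuit elimination axiom then yields a circuit $C_3\subseteq (C_1\cup C_2)\setminus\{c\}\subseteq B$. This contradicts the independence of $B$, so the circuit is unique.

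\emph{Exchange.} Fix $b\in C\setminus\{c\}$ and set $B'=(B\setminus\{b\})\cup\{c\}$. I will show $B'$ is independent. Suppose $D\subseteq B'$ were a circuit. Then $D\not\subseteq B$, so $c\in D$. Also $D\subseteq B\cup\{c\}$, so uniqueness forces $D=C$. But $b\in C$ and $b\notin B'$, a contradiction. Hence $B'$ is independent. Since $|B'|=|B|$ and all bases have the same size, every independent set of that size is maximal, so $B'$ is a basis.

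No step is a real obstacle. The only point needing care is invoking circuit elimination correctly in the uniqueness step, together with the equicardinality of bases at the end.
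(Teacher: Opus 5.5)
Your proof is correct: $B\cup\{c\}$ is dependent by maximality of $B$, so it contains a circuit $C$, and $c\in C$ because $B$ is independent. Uniqueness follows from circuit elimination at $c$, and the exchange step follows from that uniqueness together with the equal cardinality of bases. This is the standard textbook argument; the paper gives no proof of its own and simply cites Oxley for this lemma.
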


\begin{lemma}\label{lem:rank}
Let $\mathcal{M}$ be a simple matroid of rank $r$
on the ground set $[n]$. Then
\begin{equation*}
n=\dim_{\mathbb{C}}\mathrm{OS}_1(\mathcal{M}),
\quad r=\max\{p\in\mathbb{Z}_{\geq 0}:
 \mathrm{OS}_p(\mathcal{M})\neq 0\}.
\end{equation*}
\end{lemma}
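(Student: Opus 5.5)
Both equalities will follow from Lemma~\ref{lem:OS-dimension} once we compute the Whitney numbers $w_1(\mathcal{M})$ and $w_p(\mathcal{M})$ for $p>r$, together with the fact that $w_r(\mathcal{M})\neq 0$.

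For the first equality, we use simplicity of $\mathcal{M}$. There are no loops, so $\varnothing$ is a flat. There are no parallel pairs, so every singleton $\{i\}$ is closed. Hence the rank-one flats are exactly the $n$ singletons. For each of them $\mu(\varnothing,\{i\})=-1$, which gives $w_1(\mathcal{M})=(-1)\cdot(-n)=n$. Lemma~\ref{lem:OS-dimension} then yields $\dim_{\mathbb{C}}\mathrm{OS}_1(\mathcal{M})=n$. This also agrees with the identification $\mathrm{OS}_1(\mathcal{M})=\mathcal{E}_1$ noted above.

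For the second equality, we argue in two steps.

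\begin{enumerate}
\item \emph{Vanishing above $r$.} No flat has rank exceeding $r$, so for $p>r$ the sum defining $w_p(\mathcal{M})$ is empty. Thus $w_p(\mathcal{M})=0$, and $\mathrm{OS}_p(\mathcal{M})=0$ by Lemma~\ref{lem:OS-dimension}.

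\item \emph{Nonvanishing at $r$.} The only flat of rank $r$ is $[n]$ itself, so $w_r(\mathcal{M})=(-1)^r\mu(\varnothing,[n])$. It therefore suffices to know $\mu(\varnothing,[n])\neq 0$. We invoke Rota's classical theorem that the Möbius function of a geometric lattice strictly alternates in sign: $(-1)^{\operatorname{rk}Y}\mu(\varnothing,Y)>0$ for every flat $Y$. The lattice of flats of a simple matroid is geometric, so this applies.
\end{enumerate}

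If we prefer to avoid citing Rota's theorem, there is a self-contained route. Exhibit a nonzero element of $\mathrm{OS}_r(\mathcal{M})$, namely $e_B$ for an $\mathrm{nbc}$-basis $B$; the $\mathrm{nbc}$-monomials are linearly independent in $\mathrm{OS}(\mathcal{M})$. Alternatively, use the characteristic polynomial identity: $(-1)^r\mu(\varnothing,[n])$ equals the number of $\mathrm{nbc}$ bases, which is positive because the lexicographically first basis is $\mathrm{nbc}$. Either route gives $\mathrm{OS}_r(\mathcal{M})\neq0$, and combined with step~1 this shows $r=\max\{p:\mathrm{OS}_p(\mathcal{M})\neq 0\}$.

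The main obstacle is this nonvanishing in top degree. Everything else is bookkeeping with Lemma~\ref{lem:OS-dimension}. I would resolve it by citing the sign-alternation of the Möbius function of a geometric lattice, which is standard.
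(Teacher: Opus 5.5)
Your proof is correct. For $\dim_{\mathbb{C}}\mathrm{OS}_1=n$ and for the vanishing of $\mathrm{OS}_p$ when $p>r$, it is exactly the paper's argument. The paper treats $r=0$ separately, but your argument covers it automatically: then $[n]=\varnothing$ and $\mu(\varnothing,\varnothing)=1$.

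The difference is in the top-degree nonvanishing. Your main route stays inside Lemma~\ref{lem:OS-dimension}. Since $[n]$ is the unique flat of rank $r$, you get $\dim_{\mathbb{C}}\mathrm{OS}_r=(-1)^r\mu(\varnothing,[n])$, and Rota's sign-alternation theorem for geometric lattices makes this positive.

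The paper avoids the M\"obius function at this step and uses the nbc basis theorem directly; your fallback route is essentially this. It takes the lexicographically least basis $B$ and shows by a short exchange that $B$ contains no broken circuit. If $C\setminus\{c\}\subseteq B$ with $c=\min C\notin B$, then Lemma~\ref{lem:fundamental-circuit-exchange} lets one replace any $b\in C\setminus\{c\}$ by $c<b$, which gives a lexicographically smaller basis. Hence $e_B\neq 0$ in $\mathrm{OS}_r$.

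What each route buys:
\begin{enumerate}
\item Your route imports a heavier external theorem, but it yields more: $w_p(\mathcal{M})>0$ for every $0\leq p\leq r$, so every $\mathrm{OS}_p$ with $p\leq r$ is nonzero.
\item The paper's route is elementary modulo the nbc theorem, which already underlies Lemma~\ref{lem:OS-dimension}. It also proves the claim ``the lexicographically first basis is nbc,'' which your fallback asserts without justification.
\end{enumerate}
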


\begin{proof}
If $r=0$, then each element of $\mathcal{M}$ is a loop.
Because $\mathcal{M}$ contains no loops, we must have $n=0$.
In this case, $\mathrm{OS}(\mathcal{M})=\mathbb{C}$, whose only nonzero
homogeneous component has degree $0$, so both identities hold.
From now on, assume that $r\geq 1$.

Because $\mathcal{M}$ is simple, every singleton has rank $1$,
and every set consisting of two distinct elements has rank $2$.
Hence, for each $i\in[n]$ and every $j\in[n]\setminus\{i\}$, we have
\begin{equation*}
\operatorname{rk}_{\mathcal{M}}(\{i,j\})
=2>\operatorname{rk}_{\mathcal{M}}(\{i\})=1.
\end{equation*}
By the definition of a flat, $\{i\}$ is a flat of rank $1$
in $\mathcal{M}$.
Conversely, if a flat $F$ of rank $1$ contains two distinct elements
$i$ and $j$, then
\begin{equation*}
\operatorname{rk}_{\mathcal{M}}(F)
\geq \operatorname{rk}_{\mathcal{M}}(\{i,j\})=2,
\end{equation*}
a contradiction.
Hence, the flats of rank $1$ in $\mathcal{M}$ are precisely
$\{1\},\ldots,\{n\}$.
Moreover, since $\mathcal{M}$ has no loops, $\varnothing$ is the
least element of the lattice of flats $L$.
For each $i\in[n]$, the only flat properly contained in $\{i\}$
is $\varnothing$.
Hence, the recursive definition of the M\"obius function yields
\begin{equation*}
\mu(\varnothing,\{i\})
=-\sum_{\substack{X\in L\\
\varnothing\subseteq X\subsetneq\{i\}}}
\mu(\varnothing,X)
=-\mu(\varnothing,\varnothing)=-1.
\end{equation*}
Combining Lemma~\ref{lem:OS-dimension}, the definition of
$w_p(\mathcal{M})$, and the preceding observations, we have
\begin{equation*}
\dim_{\mathbb{C}}\mathrm{OS}_1(\mathcal{M})
=-\sum_{\substack{X\in L\\
\operatorname{rk}_{\mathcal{M}}(X)=1}}
\mu(\varnothing,X)
=-\sum_{i=1}^{n}\mu(\varnothing,\{i\})
=n.
\end{equation*}

We now establish the second identity.
If $p>r$, then $L$ has no flat of rank $p$.
By Lemma~\ref{lem:OS-dimension}, we obtain
$\dim_{\mathbb{C}}\mathrm{OS}_p(\mathcal{M})=0$,
and thus $\mathrm{OS}_p(\mathcal{M})=0$.
It remains, therefore, to prove that $\mathrm{OS}_r(\mathcal{M})\neq 0$.

Equip $[n]$ with the natural order $1<\cdots<n$.
For each circuit $C\in\mathfrak{C}(\mathcal{M})$,
the set $C\setminus\{\min C\}$ is called a \emph{broken circuit}.
The no-broken-circuit basis theorem (see~\cite{Falk2001}) states that
the residue classes $e_X$ corresponding to the subsets $X\subseteq[n]$
that contain no broken circuit form a homogeneous basis of
$\mathrm{OS}(\mathcal{M})$.
Write the elements of each basis in increasing order, and let $B$
be the lexicographically least basis in $\mathfrak{B}(\mathcal{M})$.
Suppose that $B$ contains a broken circuit.
Then there exists a circuit $C\in\mathfrak{C}(\mathcal{M})$ such that
$C\setminus\{c\}\subseteq B$, where $c=\min C$.
If $c\in B$, then $C\subseteq B$, contradicting the independence of $B$.
Thus, $c\notin B$ and $C\subseteq B\cup\{c\}$.
Since $\mathcal{M}$ has no loops, we may choose
$b\in C\setminus\{c\}$.
By Lemma~\ref{lem:fundamental-circuit-exchange},
$C$ is the unique circuit contained in $B\cup\{c\}$,
and $B'=(B\setminus\{b\})\cup\{c\}$ is also a basis
of $\mathcal{M}$.
Since $c<b$, the basis $B'$ is lexicographically smaller than $B$,
contrary to the choice of $B$.
Therefore, $B$ contains no broken circuit.

By the no-broken-circuit basis theorem, the residue class $e_B$
is nonzero.
Since $|B|=r$, it follows that $e_B\in\mathrm{OS}_r(\mathcal{M})$,
and hence $\mathrm{OS}_r(\mathcal{M})\neq 0$.
Thus,
$r\in\bigl\{p\in\mathbb{Z}_{\geq 0}:
\mathrm{OS}_p(\mathcal{M})\neq 0\bigr\}$.
Conversely, $\mathrm{OS}_p(\mathcal{M})=0$ whenever $p>r$,
so each element of this set is at most $r$.
Consequently, $r$ is its maximum element, that is,
\begin{equation*}
r=\max\bigl\{p\in\mathbb{Z}_{\geq 0}:
\mathrm{OS}_p(\mathcal{M})\neq 0\bigr\}.
\end{equation*}
\end{proof}

Let $\mathbb{K}$ be a field.
A unital $\mathbb{K}$-algebra $A$ is said to be a
\emph{nonnegatively graded algebra}, or simply a \emph{graded algebra},
if it can be written as a direct sum of $\mathbb{K}$-vector spaces
\begin{equation*}
A=\bigoplus_{p\geq 0}A_p,
\end{equation*}
with $1_A\in A_0$ and $A_pA_q\subseteq A_{p+q}$ for all $p,q\geq 0$.
Here, $1_A$ is the identity element of $A$, and $A_p$ is referred to as
the \emph{homogeneous component of degree $p$} of $A$.
A nonzero element of $A_p$ is termed a
\emph{homogeneous element of degree $p$}.
Put $\mathbb{K}1_A=\{a1_A:a\in\mathbb{K}\}$.
When $A_0=\mathbb{K}1_A$, the algebra $A$ is called a
\emph{connected graded algebra}.
Under the natural identification of $\mathbb{K}$ with $\mathbb{K}1_A$
given by $a\mapsto a1_A$, this condition is typically written as
$A_0=\mathbb{K}$.

If $A$ is generated by $A_1$ as a unital $\mathbb{K}$-algebra and
$\dim_{\mathbb{K}}A_1<\infty$, then $A$ is said to be
\emph{generated by finitely many homogeneous elements of degree $1$}.
Here, generation by $A_1$ means that every element of $A$ is a finite
$\mathbb{K}$-linear combination of finite products of elements of $A_1$,
with the empty product taken to be $1_A$.
Let $A=\bigoplus_{p\geq 0}A_p$ and $B=\bigoplus_{p\geq 0}B_p$
be two graded $\mathbb{K}$-algebras.
We say that $A$ and $B$ are \emph{isomorphic as ungraded
$\mathbb{K}$-algebras} if there exists a $\mathbb{K}$-algebra
isomorphism $A\to B$.
If there exists a $\mathbb{K}$-algebra isomorphism
$\varphi\colon A\to B$ such that $\varphi(A_p)=B_p$ for every $p\geq 0$,
then $A$ and $B$ are said to be \emph{isomorphic as graded
$\mathbb{K}$-algebras}, and $\varphi$ is called a
\emph{graded $\mathbb{K}$-algebra isomorphism}.

\begin{lemma}(\cite{BellZhang2017})\label{lem:graded}
Suppose $A$ and $B$ are connected graded algebras over a field $\mathbb{K}$,
each generated by finitely many homogeneous elements of degree $1$.
If $A$ and $B$ are isomorphic as ungraded $\mathbb{K}$-algebras,
then $A$ and $B$ are isomorphic as graded $\mathbb{K}$-algebras.
\end{lemma}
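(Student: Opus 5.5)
The plan is to pass through associated graded algebras with respect to the augmentation ideals. The one step I do not yet know how to close is the identification $\operatorname{gr}_{\mathfrak n}B\cong B$ in the last paragraph.

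\textbf{Step 1: each algebra is its own associated graded.} Let $\varphi\colon A\to B$ be an ungraded $\mathbb{K}$-algebra isomorphism. Put $\mathfrak m_A=\bigoplus_{p\ge1}A_p$ and $\mathfrak m_B=\bigoplus_{p\geq 1}B_p$. Since $A$ is connected and generated by $A_1$, we have $\mathfrak m_A^k=\bigoplus_{p\ge k}A_p$ for every $k$. Hence $\mathfrak m_A^k/\mathfrak m_A^{k+1}\cong A_k$, and there is a graded isomorphism
\[
A\cong\operatorname{gr}_{\mathfrak m_A}A=\bigoplus_{k\ge0}\mathfrak m_A^k/\mathfrak m_A^{k+1}.
\]
The same holds for $B$.

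\textbf{Step 2: transport along $\varphi$.} Set $\mathfrak n=\varphi(\mathfrak m_A)$, a two-sided ideal of codimension one in $B$. Then $\mathfrak n=\ker\chi$ for the character $\chi=\varepsilon_A\circ\varphi^{-1}$ of $B$, and $\varphi(\mathfrak m_A^k)=\mathfrak n^k$. So $\varphi$ induces a graded isomorphism $A\cong\operatorname{gr}_{\mathfrak m_A}A\cong\operatorname{gr}_{\mathfrak n}B$. It therefore suffices to prove that $\operatorname{gr}_{\mathfrak n}B\cong B$ as graded algebras.

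\textbf{Step 3: equal Hilbert functions.} Write $V=B_1$ with basis $x_1,\dots,x_m$ and $B=T(V)/I$ with $I$ a graded ideal. Then $\chi$ is given by the point $c=(\chi(x_i))$, and $\mathfrak n$ is generated by the elements $y_i=x_i-c_i$. For $t\in\mathbb K$ let $\chi_t$ be the character $x_i\mapsto tc_i$. For $t\neq0$, $\chi_t=\chi\circ\sigma_t^{-1}$, where $\sigma_t$ is the grading automorphism $b\mapsto t^pb$ on $B_p$. Also $\chi_0$ is the augmentation, so $\ker\chi_0=\mathfrak m_B$.

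Consider the family of ideals $\mathfrak n_t=\ker\chi_t$ over $\mathbb K[t]$. Upper semicontinuity of $\dim B/\mathfrak n_t^k$ gives $\dim B/\mathfrak n^k\le\dim B/\mathfrak m_B^k$. Rewritten, this reads $\dim A/\mathfrak m_A^k\le \dim B/\mathfrak m_B^k$. The same argument applied to $\varphi^{-1}$ gives the reverse inequality. Hence $\dim A_k=\dim B_k$ for all $k$, and $\dim B/\mathfrak n^k=\dim B/\mathfrak m_B^k$.

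\textbf{Step 4: identify $\operatorname{gr}_{\mathfrak n}B$ with $B$ (main obstacle).} Substituting $x=y+c$ shows that $\operatorname{gr}_{\mathfrak n}B\cong T(V)/L$. Here $L$ is the span of the lowest-degree forms of the elements $f(y+c)$ with $f\in I$, while the top-degree forms of these same elements are exactly the elements of $I$.

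Scaling $c\mapsto tc$ does not change $L$, so the family $\operatorname{gr}_{\mathfrak n_t}B$ is constant for $t\ne0$. At $t=0$ its fibre is $B$, and by Step 3 the Hilbert function is constant in $t$. I would first try to make this a flat family over $\mathbb K[t]$ via a two-parameter Rees construction, and then show $L=\theta(I)$ for a graded automorphism $\theta$ of $T(V)$.

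The difficulty is that a flat family which is constant away from $t=0$ only shows that $B$ is a degeneration of $T(V)/L$, not that the two are isomorphic. Passing from degeneration to isomorphism is the delicate point. Should the Rees-family approach fail, I would follow the argument of Bell and Zhang, which handles exactly this step.
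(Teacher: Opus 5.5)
The paper gives no proof of this lemma; it only cites Bell and Zhang. Your proposal therefore has to stand on its own, and it does not. Step~4 is where all the content of the lemma lies, and you leave it open.

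Your Steps~1--3 are correct. In Step~3 you do not need semicontinuity. Write $y_i=x_i-c_i$. Then $\mathfrak n^k$ is the span of the $y$-words of length $\ge k$, and each $y$-word of length $<k$ expands into $B_{<k}=\bigoplus_{p<k}B_p$. Hence $B=B_{<k}+\mathfrak n^k$, and your inequality is immediate.

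The Rees-family route you propose for Step~4 cannot work by itself, for the reason you suspect. A flat family of graded algebras with constant Hilbert function that is trivial away from $t=0$ can have a non-isomorphic special fibre. For example, over an algebraically closed field of characteristic $\neq 2$, $\mathbb{K}[x,y]/(x^2-ty^2)\cong\mathbb{K}[u,v]/(uv)$ for $t\neq 0$, but not for $t=0$. Deferring to Bell--Zhang at exactly this step leaves the proof incomplete.

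The missing idea is to extract more from Step~3 than equal Hilbert functions. You have $B=B_{<k}+\mathfrak n^k$ and $\dim B/\mathfrak n^k=\dim B_{<k}$, so in fact $B=B_{<k}\oplus\mathfrak n^k$ for every $k$. Now take a homogeneous relation $f\in I_p$ and look at $f(x_1-c_1,\dots,x_m-c_m)$ in $B$:
\begin{itemize}
\item it equals $f(y_1,\dots,y_m)$, which lies in $\mathfrak n^p$;
\item it also equals $f(x)$ plus terms of degree $<p$, and $f(x)=0$ in $B$, so it lies in $B_{<p}$.
\end{itemize}
By the direct sum decomposition, it is $0$.

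So $x_i\mapsto x_i-c_i$ preserves $I$ and induces an algebra endomorphism $\theta$ of $B$. It is surjective, since its image contains $1$ and all $y_i$, hence all $x_i$. It is injective, since it preserves the degree filtration and induces the identity on the associated graded. Moreover, $\theta(\mathfrak m_B)$ is the ideal generated by the $y_i$, which is $\mathfrak n$.

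Therefore $\theta^{-1}\circ\varphi\colon A\to B$ is an isomorphism carrying $\mathfrak m_A$ onto $\mathfrak m_B$. Your Step~1 then turns it into a graded isomorphism $A\cong\operatorname{gr}_{\mathfrak m_A}A\cong\operatorname{gr}_{\mathfrak m_B}B\cong B$.

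In the language of your Step~4, this shows that translation by $c$ preserves $I$, so $L=I$ and the graded automorphism you were looking for is the identity. No deformation argument is needed, and the argument does not depend on the size of $\mathbb{K}$.
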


%From now on, write $\mathrm{OS}=\mathrm{OS}(\mathcal{M})$ and
%$\mathrm{OS}_p=\mathrm{OS}_p(\mathcal{M})$.
%We continue to use the notation of \cite{CordovilForge2003} and
%identify $\mathrm{OS}_1=\mathcal{E}_1$ naturally with $\mathbb{C}^n$.
%For $\lambda=(\lambda_1,\ldots,\lambda_n)\in\mathbb{C}^n$, set
%$e_\lambda=\sum_{i=1}^{n}\lambda_i e_i$.
%Left multiplication by $e_\lambda$ defines linear maps
%$\mathrm{OS}_p\to\mathrm{OS}_{p+1}$.
%Since $e_\lambda\wedge e_\lambda=0$, these maps form a cochain complex,
%whose $p$-th cohomology group is denoted by
%$H^p(\mathrm{OS},e_\lambda)$.
%The $p$-th \emph{resonance variety} of $\mathrm{OS}$ is defined by
%\begin{equation*}
%\mathcal{R}_p(\mathrm{OS})
%=\bigl\{\lambda\in\mathbb{C}^n:
%H^p(\mathrm{OS},e_\lambda)\neq 0\bigr\}.
%\end{equation*}
Henceforth, write $\mathrm{OS}=\mathrm{OS}(\mathcal{M})$
and $\mathrm{OS}_p=\mathrm{OS}_p(\mathcal{M})$.
Identify $\mathrm{OS}_1$ naturally with $\mathbb{C}^n$.
For $\lambda=(\lambda_1,\ldots,\lambda_n)\in\mathbb{C}^n$,
put $e_\lambda=\sum_{i=1}^{n}\lambda_i e_i$.
For each integer $p\geq 0$, define
$d_\lambda^p:\mathrm{OS}_p\to\mathrm{OS}_{p+1}$
by $d_\lambda^p(a)=e_\lambda a$.
Because $e_\lambda^2=0$, these maps constitute a cochain complex.
Its $p$-th cohomology vector space is
\begin{equation*}
H^p(\mathrm{OS},e_\lambda)
=\ker d_\lambda^p/\operatorname{im}d_\lambda^{p-1},
\end{equation*}
where $d_\lambda^{-1}$ is the zero map from
$\mathrm{OS}_{-1}=0$ to $\mathrm{OS}_0$.
For every integer $p\geq 0$, define the
$p$-th \emph{resonance variety} by
\begin{equation*}
\mathcal{R}_p(\mathrm{OS})
=\{\lambda\in\mathbb{C}^n:
 H^p(\mathrm{OS},e_\lambda)\neq 0\}.
\end{equation*}
Let
$L_2(\mathcal{M})
=\bigl\{X\in L(\mathcal{M}):
\operatorname{rk}_{\mathcal{M}}(X)=2\bigr\}$
and
$\Im_2(\mathcal{M})=\Im(\mathcal{M})\cap\mathcal{E}_2$.
For each $X\in L_2(\mathcal{M})$, define
$\mathcal{E}_2(X)
=\operatorname{span}_{\mathbb{C}}
\{e_{ij}:i,j\in X,\ i<j\}$,
where $e_{ij}=e_i\wedge e_j$.
Set $\Im_2(X)=\Im_2(\mathcal{M})\cap\mathcal{E}_2(X)$.
For each $X\in L_2(\mathcal{M})$, let
$L_X=\{\lambda\in\mathbb{C}^n:
\lambda_i=0\text{ for all }i\notin X,\,
\sum_{i\in X}\lambda_i=0\}$.
%When $|X|\geq 3$, this space is called a \emph{local subspace},
%and $L_X$ is an irreducible component of
%$\mathcal{R}_1(\mathrm{OS})$, called a \emph{local component}.
%The remaining irreducible components are called
%\emph{nonlocal components}.
%For any linear subspace $U\subseteq\mathbb{C}^n$, set
%\begin{equation*}
%\bigwedge\nolimits^2 U
%=\operatorname{span}_{\mathbb{C}}
%\{e_\lambda\wedge e_\mu:\lambda,\mu\in U\}
%\subseteq\mathcal{E}_2.
%\end{equation*}
% All exterior products below are taken in the exterior algebra $\mathcal{E}$.
When $|X|\geq 3$, the subspace $L_X$ is an irreducible
component of $\mathcal{R}_1(\mathrm{OS})$, called a
\emph{local component} (see Lemma~\ref{lem:local-resonance}).
The positive-dimensional irreducible components that are not
of this form are called \emph{nonlocal components}.
For every linear subspace $U\subseteq\mathbb{C}^n$, define
\begin{equation*}
\bigwedge\nolimits^2 U
=\operatorname{span}_{\mathbb{C}}
 \{e_\lambda\wedge e_\mu:\lambda,\mu\in U\}
\subseteq\mathcal{E}_2.
\end{equation*}
Here and in the definitions of $F_d(\mathrm{OS})$ below,
exterior products are taken in $\mathcal{E}$.

For $S\subseteq[n]$, define
$\overline{S}
=\left\{i\in[n]:
\operatorname{rk}_{\mathcal{M}}(S\cup\{i\})
=\operatorname{rk}_{\mathcal{M}}(S)\right\}$,
which is called the \emph{closure} of $S$ in $\mathcal{M}$.
It is the smallest flat containing $S$, and
$\operatorname{rk}_{\mathcal{M}}(\overline{S})
=\operatorname{rk}_{\mathcal{M}}(S)$.

\begin{lemma}\label{lem:local-decomposition}
Let $\mathcal{M}$ be a simple matroid on the ground set $[n]$.
Then
\begin{equation*}
\mathcal{E}_2
=\bigoplus_{X\in L_2(\mathcal{M})}\mathcal{E}_2(X),
\qquad
\Im_2(\mathcal{M})
=\bigoplus_{X\in L_2(\mathcal{M})}\Im_2(X).
\end{equation*}
Moreover, for each $X\in L_2(\mathcal{M})$, we have
$\Im_2(X)
=\operatorname{span}_{\mathbb{C}}
\{\partial(e_C):C\subseteq X,\ |C|=3\}$.
\end{lemma}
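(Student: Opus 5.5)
The first decomposition is a statement about the set of pairs. Since $\mathcal{M}$ is simple, every pair $\{i,j\}$ with $i<j$ has rank $2$, so its closure $\overline{\{i,j\}}$ is a rank-$2$ flat containing both $i$ and $j$. Any rank-$2$ flat $X$ containing $i$ and $j$ satisfies $X=\overline{\{i,j\}}$: it contains this closure, and a strictly larger flat would have larger rank. Hence every pair lies in exactly one $X\in L_2(\mathcal{M})$. The monomials $e_{ij}$ form a basis of $\mathcal{E}_2$, and we have just partitioned them according to which line contains $\{i,j\}$. This gives $\mathcal{E}_2=\bigoplus_{X\in L_2}\mathcal{E}_2(X)$ directly.

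For the ideal, I will first describe $\Im_2(\mathcal{M})$ explicitly. Because $\partial(e_C)$ is homogeneous of degree $|C|-1$ and $\mathcal{E}$ is graded, the degree-$2$ part of the ideal is spanned by products $e_Y\,\partial(e_C)\,e_Z$ of total degree $2$. Simplicity gives $|C|\ge 3$, so $\deg\partial(e_C)\ge 2$. Therefore only circuits with $|C|=3$ and scalar multipliers contribute, and $\Im_2(\mathcal{M})=\operatorname{span}\{\partial(e_C): C\in\mathfrak{C}(\mathcal{M}),\ |C|=3\}$.

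A $3$-circuit $C$ has rank $2$, so $\overline{C}$ is a rank-$2$ flat, i.e. $C\subseteq X$ for a unique $X\in L_2$. Moreover, $\partial(e_{ijk})=e_{jk}-e_{ik}+e_{ij}$ lies in $\mathcal{E}_2(X)$, since all three pairs lie in $X$. Conversely, any $3$-subset $C$ of a rank-$2$ flat $X$ has rank $2$, and all its $2$-subsets are independent, so $C$ is a circuit. Hence the $3$-circuits are exactly the $3$-subsets of the lines. Writing $S_X=\operatorname{span}\{\partial(e_C):C\subseteq X,|C|=3\}\subseteq\mathcal{E}_2(X)$, we get $\Im_2(\mathcal{M})=\sum_X S_X$.

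This sum is direct because the ambient sum of the $\mathcal{E}_2(X)$ is direct. Finally, I will show $\Im_2(X)=S_X$. Clearly $S_X\subseteq\Im_2(\mathcal{M})\cap\mathcal{E}_2(X)$. For the reverse inclusion, take $v\in\Im_2(\mathcal{M})\cap\mathcal{E}_2(X)$ and write $v=\sum_Y s_Y$ with $s_Y\in S_Y\subseteq\mathcal{E}_2(Y)$. By uniqueness of components in $\bigoplus_Y\mathcal{E}_2(Y)$, we get $v=s_X\in S_X$. This yields both $\Im_2(\mathcal{M})=\bigoplus_X\Im_2(X)$ and the stated description of $\Im_2(X)$.

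The only step needing care is the claim that the degree-$2$ part of the two-sided ideal is spanned by the generators $\partial(e_C)$ with $|C|=3$. I expect this to be the main (mild) obstacle, and it uses only that all generators are homogeneous of degree $\ge 2$.
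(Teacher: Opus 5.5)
Your proposal is correct and follows essentially the same route as the paper. You partition the pairs $\{i,j\}$ by their unique rank-$2$ closure to split $\mathcal{E}_2$, and you use $\deg\partial(e_C)=|C|-1\geq 2$ to reduce $\Im_2(\mathcal{M})$ to the span of the $\partial(e_C)$ with $|C|=3$. You then identify the $3$-circuits with the $3$-subsets of rank-$2$ flats and read off $\Im_2(X)$ from uniqueness of components in $\bigoplus_X\mathcal{E}_2(X)$, exactly as the paper does.
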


\begin{proof}
Because $\mathcal{M}$ is simple, we have
$\operatorname{rk}_{\mathcal{M}}(\{i,j\})=2$
for all $1\leq i<j\leq n$.
Thus $\overline{\{i,j\}}$ is a flat of rank $2$.
If $X$ is a flat of rank $2$ containing $\{i,j\}$, then the minimality
of the closure yields $\overline{\{i,j\}}\subseteq X$.
Since strict inclusion between flats forces a strict increase in rank,
it follows that $\overline{\{i,j\}}=X$.
Consequently, every two-element subset $\{i,j\}$ lies in exactly
one flat of rank $2$.
Hence, as $X$ ranges over $L_2(\mathcal{M})$, the sets
$\{e_{ij}:i,j\in X,\ i<j\}$ are pairwise disjoint, and their union is
the standard basis $\{e_{ij}:1\leq i<j\leq n\}$ of $\mathcal{E}_2$.
Since these sets form bases of the respective subspaces
$\mathcal{E}_2(X)$, we obtain
\begin{equation*}
\mathcal{E}_2
=\bigoplus_{X\in L_2(\mathcal{M})}\mathcal{E}_2(X).
\end{equation*}

We now establish the remaining two identities.
By the definition of $\Im(\mathcal{M})$, $\Im(\mathcal{M})$ is the homogeneous two-sided ideal
generated by $\partial(e_C)$ for $C\in\mathfrak{C}(\mathcal{M})$.
Because $\mathcal{M}$ is simple, every circuit contains at least three
elements, so $\deg\partial(e_C)=|C|-1\geq 2$.
Using the definition of a generated two-sided ideal together with the fact
that multiplication respects the grading, we obtain
\begin{equation*}
\Im_2(\mathcal{M})
=\sum_{C\in\mathfrak{C}(\mathcal{M})}
\ \sum_{\substack{p,q\geq 0\\p+|C|-1+q=2}}
\mathcal{E}_p\wedge\partial(e_C)\wedge\mathcal{E}_q.
\end{equation*}
Since $|C|-1\geq 2$, the summation condition is equivalent to
$|C|=3$ and $p=q=0$.
Together with $\mathcal{E}_0=\mathbb{C}$, this gives
\begin{equation*}
\Im_2(\mathcal{M})
=\operatorname{span}_{\mathbb{C}}
\{\partial(e_C):
C\in\mathfrak{C}(\mathcal{M}),\ |C|=3\}.
\end{equation*}

Every circuit with three elements has rank $2$, and its closure is
the unique flat of rank $2$ containing it.
Conversely, if $X\in L_2(\mathcal{M})$ and $C\subseteq X$ with $|C|=3$,
then $\operatorname{rk}_{\mathcal{M}}(C)\leq 2<|C|$,
so $C$ is dependent.
Since $\mathcal{M}$ is simple, every proper subset of $C$ is independent,
and therefore $C$ is a circuit.
Grouping the circuits with three elements in the preceding spanning
set according to their closures, we obtain
\begin{equation*}
\Im_2(\mathcal{M})
=\sum_{X\in L_2(\mathcal{M})}
\operatorname{span}_{\mathbb{C}}
\{\partial(e_C):C\subseteq X,\ |C|=3\}.
\end{equation*}

For each $X\in L_2(\mathcal{M})$, if $C\subseteq X$ and $|C|=3$,
then $\partial(e_C)\in\mathcal{E}_2(X)$.
Thus, the subspace corresponding to $X$ in the preceding sum is
contained in $\mathcal{E}_2(X)$.
By the direct sum decomposition of $\mathcal{E}_2$ established above,
this sum is direct.
Now take $u\in\Im_2(\mathcal{M})\cap\mathcal{E}_2(X)$, and write $u$
according to this direct sum decomposition.
Since $u\in\mathcal{E}_2(X)$, the uniqueness of the direct sum
decomposition in $\mathcal{E}_2$ forces all components except
the one corresponding to $X$ to vanish.
Hence, $u$ lies in
$\operatorname{span}_{\mathbb{C}}
\{\partial(e_C):C\subseteq X,\ |C|=3\}$.
The reverse inclusion holds because every such
$\partial(e_C)$ belongs to
$\Im_2(\mathcal{M})\cap\mathcal{E}_2(X)$.
Therefore,
\begin{equation*}
\Im_2(X)
=\Im_2(\mathcal{M})\cap\mathcal{E}_2(X)
=\operatorname{span}_{\mathbb{C}}
\{\partial(e_C):C\subseteq X,\ |C|=3\},
\end{equation*}
and hence
\begin{equation*}
\Im_2(\mathcal{M})
=\bigoplus_{X\in L_2(\mathcal{M})}\Im_2(X).
\end{equation*}
This completes the proof.
\end{proof}

\begin{lemma}\label{lem:local-relations}
Let $\mathcal{M}$ be a simple matroid, and let
$X\in L_2(\mathcal{M})$.
Then
\begin{equation*}
\Im_2(X)=\bigwedge\nolimits^2 L_X,
\qquad
\dim_{\mathbb{C}}L_X=|X|-1,
\qquad
\dim_{\mathbb{C}}\Im_2(X)=\binom{|X|-1}{2}.
\end{equation*}
In particular, if $|X|=2$, then $\Im_2(X)=0$.
\end{lemma}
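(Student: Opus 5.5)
By Lemma~\ref{lem:local-decomposition}, $\Im_2(X)$ is spanned by the elements $\partial(e_C)$ with $C=\{i,j,k\}\subseteq X$, $i<j<k$. The plan has three steps: compute these generators explicitly, prove that they span exactly $\bigwedge^2 L_X$, and then count dimensions.

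First, $\dim L_X=|X|-1$ is immediate. $L_X$ is the kernel of the nonzero linear functional $\lambda\mapsto\sum_{i\in X}\lambda_i$ on the $|X|$-dimensional coordinate subspace $\operatorname{span}\{e_i:i\in X\}$. Note that $|X|\geq 2$ for a rank-$2$ flat, so this coordinate subspace is nonzero. A convenient basis is $\{e_i-e_{i_0}:i\in X\setminus\{i_0\}\}$, where $i_0=\min X$.

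Next, expanding $\partial$ by the graded Leibniz rule gives
\[
\partial(e_{ijk})=e_{jk}-e_{ik}+e_{ij}.
\]
A direct check shows $(e_j-e_i)\wedge(e_k-e_i)=e_{jk}-e_{ji}-e_{ik}=e_{jk}+e_{ij}-e_{ik}$. Hence $\partial(e_{ijk})=(e_j-e_i)\wedge(e_k-e_i)$, which lies in $\bigwedge^2 L_X$, and therefore $\Im_2(X)\subseteq\bigwedge^2L_X$.

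For the reverse inclusion, $\bigwedge^2 L_X$ is spanned by the wedges $(e_j-e_{i_0})\wedge(e_k-e_{i_0})$ of pairs of basis vectors with $j<k$ in $X\setminus\{i_0\}$. Each such wedge equals $\partial(e_{i_0jk})$, and $\{i_0,j,k\}\subseteq X$ is a $3$-element subset. So every spanning element of $\bigwedge^2 L_X$ lies in $\Im_2(X)$, which gives equality.

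Finally, $\bigwedge^2 L_X\subseteq\mathcal{E}_2$ is the image of the abstract exterior square $\Lambda^2 L_X$. Since $L_X$ is a subspace of $\mathcal{E}_1$, the induced map $\Lambda^2 L_X\to\Lambda^2\mathcal{E}_1=\mathcal{E}_2$ is injective. Concretely, wedges of pairs from a basis of $L_X$ remain linearly independent: one can extend that basis to a basis of $\mathcal{E}_1$ and use the standard basis of $\Lambda^2$. Therefore $\dim\Im_2(X)=\binom{|X|-1}{2}$. When $|X|=2$ this dimension is $0$, so $\Im_2(X)=0$.

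The only step needing care is this injectivity justification, together with the signs in the $\partial$ computation. Both are routine.
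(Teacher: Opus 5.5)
Your proof is correct and follows essentially the same route as the paper. Both arguments use the basis $\{e_i-e_{i_0}: i\in X\setminus\{i_0\}\}$ of $L_X$, the identity $\partial(e_{ijk})=(e_j-e_i)\wedge(e_k-e_i)$ in both directions of the inclusion, and then a dimension count. Your added justification that $\dim\bigwedge\nolimits^2 L_X=\binom{|X|-1}{2}$, via linear independence of wedges of basis vectors, fills in a step the paper leaves implicit.
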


\begin{proof}
Fix $X\in L_2(\mathcal{M})$ and put $i_0=\min X$.
Under the natural identification of $\mathbb{C}^n$ with
$\mathcal{E}_1$, the vectors $e_i-e_{i_0}$, where
$i\in X\setminus\{i_0\}$, lie in $L_X$.
For every $\lambda\in L_X$, the conditions
$\lambda_i=0$ for all $i\notin X$ and
$\sum_{i\in X}\lambda_i=0$ yield
\begin{equation*}
e_\lambda
=\sum_{i\in X}\lambda_i e_i
=\sum_{i\in X\setminus\{i_0\}}\lambda_i(e_i-e_{i_0}).
\end{equation*}
Hence, these vectors span $L_X$.
If $\sum_{i\in X\setminus\{i_0\}}a_i(e_i-e_{i_0})=0$,
where $a_i\in\mathbb{C}$, then comparing the coefficients of
$e_i$ for $i\in X\setminus\{i_0\}$ shows that every $a_i$ is zero.
Therefore, these vectors are linearly independent and thus form
a basis of $L_X$.
Consequently,
\begin{equation*}
\dim_{\mathbb{C}}L_X=|X|-1.
\end{equation*}

As established above,
$\{e_i-e_{i_0}:i\in X\setminus\{i_0\}\}$ forms a basis of $L_X$.
For any $i,j\in X\setminus\{i_0\}$ with $i<j$, we have
\begin{equation*}
(e_i-e_{i_0})\wedge(e_j-e_{i_0})
=e_{i_0}\wedge e_i-e_{i_0}\wedge e_j+e_i\wedge e_j
\in\Im_2(X).
\end{equation*}
These exterior products span $\bigwedge\nolimits^2 L_X$; hence,
$\bigwedge\nolimits^2 L_X\subseteq\Im_2(X)$.
Conversely, for any $i,j,k\in X$ with $i<j<k$, we have
\begin{equation*}
e_i\wedge e_j-e_i\wedge e_k+e_j\wedge e_k
=(e_j-e_i)\wedge(e_k-e_i).
\end{equation*}
Since $e_j-e_i,e_k-e_i\in L_X$, every such generator of $\Im_2(X)$
belongs to $\bigwedge\nolimits^2 L_X$.
Thus, $\Im_2(X)\subseteq\bigwedge\nolimits^2 L_X$, and hence
\begin{equation*}
\Im_2(X)=\bigwedge\nolimits^2 L_X.
\end{equation*}
Combining this equality with $\dim_{\mathbb{C}}L_X=|X|-1$, we obtain
\begin{equation*}
\dim_{\mathbb{C}}\Im_2(X)
=\dim_{\mathbb{C}}\bigwedge\nolimits^2 L_X
=\binom{|X|-1}{2}.
\end{equation*}
\end{proof}

\begin{lemma}(\cite{Falk1997})\label{lem:local-resonance}
Let $\mathcal{M}$ be a simple matroid.
Suppose $X\in L_2(\mathcal{M})$ and $|X|\geq 3$. Then $L_X$ is an
irreducible component of $\mathcal{R}_1(\mathrm{OS})$ of dimension
$|X|-1$, referred to as a local component.
\end{lemma}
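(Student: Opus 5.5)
We need to prove that for $X\in L_2(\mathcal{M})$ with $|X|\geq 3$, the subspace $L_X$ is an irreducible component of $\mathcal{R}_1(\mathrm{OS})$ of dimension $|X|-1$. The dimension claim is already contained in Lemma~\ref{lem:local-relations}, so two things remain. First, $L_X\subseteq\mathcal{R}_1(\mathrm{OS})$. Second, $L_X$ is maximal among irreducible closed subsets of $\mathcal{R}_1(\mathrm{OS})$.

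Since $\mathrm{OS}_0=\mathbb{C}$ and $\mathrm{OS}_1=\mathcal{E}_1$, we have $H^1(\mathrm{OS},e_\lambda)\neq 0$ for $\lambda\neq 0$ exactly when there is $\mu\in\mathbb{C}^n$, not proportional to $\lambda$, with $e_\lambda\wedge e_\mu\in\Im_2(\mathcal{M})$. By Lemma~\ref{lem:local-decomposition}, $\Im_2(\mathcal{M})=\bigoplus_{Y}\Im_2(Y)$, and by Lemma~\ref{lem:local-relations}, $\Im_2(Y)=\bigwedge\nolimits^2 L_Y$.

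\textbf{Containment.} Take $\lambda\in L_X\setminus\{0\}$. Because $\dim L_X=|X|-1\geq 2$, we can pick $\mu\in L_X$ independent of $\lambda$. Then $e_\lambda\wedge e_\mu\in\bigwedge\nolimits^2L_X=\Im_2(X)$, so $H^1(\mathrm{OS},e_\lambda)\neq 0$. The point $\lambda=0$ lies in $\mathcal{R}_1$ trivially whenever $n\geq 1$. Hence $L_X\subseteq\mathcal{R}_1(\mathrm{OS})$, and $L_X$ is irreducible because it is a linear space.

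\textbf{Maximality.} Let $V\supseteq L_X$ be an irreducible closed subset of $\mathcal{R}_1$. We must show $V=L_X$.

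I would first show that the set $U$ of $\lambda\in L_X$ with $\lambda_i\neq0$ for all $i\in X$ is open dense in $L_X$. It is nonempty: for $|X|\geq 3$ take $\lambda_i=1$ for all $i\in X$ except one index, where $\lambda_i=-(|X|-1)$.

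Next I would use the standard tangent-cone / Aomoto-complex argument. Each irreducible component of $\mathcal{R}_1$ is a linear subspace $W$ such that $e_\lambda\wedge e_\mu\in\Im_2$ for all $\lambda,\mu\in W$, i.e.\ an isotropic subspace. Citing this linearity from Falk or Libgober--Yuzvinsky is the shortcut I would take. It then suffices to show that any subspace $W\supseteq L_X$ with $\bigwedge\nolimits^2W\subseteq\Im_2(\mathcal{M})$ equals $L_X$.

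So take $\nu\in W$ and $\lambda\in U$. Decompose
\[
e_\lambda\wedge e_\nu=\sum_Y \pi_Y(e_\lambda\wedge e_\nu)
\]
according to Lemma~\ref{lem:local-decomposition}, and require each component $\pi_Y(e_\lambda\wedge e_\nu)$ to lie in $\Im_2(Y)$.

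For $j\notin X$, consider the line $Y=\overline{\{i,j\}}$ with $i\in X$. This $Y$ meets $X$ only in $i$. The component $\pi_Y(e_\lambda\wedge e_\nu)$ contains the term $\lambda_i\nu_j\,e_i\wedge e_j$ together with terms $-\lambda_k\nu_i$ for $k\in Y\cap X=\{i\}$. Imposing membership in $\bigwedge\nolimits^2L_Y$ on these components and comparing coefficients should force $\nu_j=0$. The comparison is that every element of $\bigwedge\nolimits^2L_Y$ has coefficient sums $\sum_{k}c_{ik}=0$ for each fixed $i$. For fixed $i$, the $e_i$-row of $\pi_Y(e_\lambda\wedge e_\nu)$ sums to $\lambda_i\sum_{k\in Y\setminus\{i\}}\nu_k-\nu_i\cdot\lambda_i$. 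Running over all $Y$ through $i$ gives linear conditions on $\nu$, and with $\lambda_i\neq0$ these should yield $\nu_j=0$ for all $j\notin X$.

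Once $\nu$ is supported on $X$, the component of $e_\lambda\wedge e_\nu$ in $\mathcal{E}_2(X)$ lies in $\bigwedge\nolimits^2L_X$. Its row-sum condition gives $\lambda_i\sum_{k\in X}\nu_k=0$, hence $\nu\in L_X$.

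\textbf{Main obstacle.} I expect the hard step to be the coefficient bookkeeping that forces $\nu_j=0$ for $j\notin X$: the row sums over the various lines through $i$ interact. The fallback is to work at a generic $\lambda\in U$ and to check that the map $\mu\mapsto e_\lambda\wedge e_\mu$ has kernel modulo $\Im_2$ exactly equal to $L_X$, using the direct sum decomposition. This shows that the tangent space of $\mathcal{R}_1$ at a generic point of $L_X$ is $L_X$, which forces $L_X$ to be a component.
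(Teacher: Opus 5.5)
Your dimension count and your proof of $L_X\subseteq\mathcal{R}_1(\mathrm{OS})$ are correct. The reduction to showing $\ker d^1_\lambda=L_X$ for $\lambda\in U$ is also sound, \emph{provided} every component through $\lambda$ is a linear subspace $W$ with $\bigwedge\nolimits^2W\subseteq\Im_2(\mathcal{M})$.

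\textbf{The missing step.} The step you flag as the main obstacle is exactly where the proof is missing, and the bookkeeping you sketch cannot close it. For a line $Y\neq X$ with $Y\cap X=\{i\}$ one has $\pi_Y(e_\lambda\wedge e_\nu)=\lambda_i\,e_i\wedge e_{\rho_Y(\nu)}$. There is no $-\lambda_i\nu_i$ term, since $e_i\wedge e_i=0$. The $e_i$-row conditions you use therefore give only $\sum_{k\in Y\setminus\{i\}}\nu_k=0$ for each such $Y$. Even taken over all $i\in X$, these do not force $\nu_j=0$. For a counterexample, delete one point of a line $\ell$ of $\mathrm{PG}(2,3)$ and let $X$ be the rest of $\ell$. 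The lines $Y\neq X$ through points of $X$ meet the affine part $\mathbb{F}_3^2$ in the lines of three of the four parallel classes. Put $\nu_x=\omega^{\langle a,x\rangle}$ on affine points, with $\omega$ a primitive cube root of unity and $a\neq 0$ orthogonal to the fourth direction, and $\nu=0$ on $X$. This $\nu$ satisfies all your conditions, yet $\nu\notin L_X$.

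The fix is to use the other rows, as your own row-sum criterion requires. The row of $j\in Y\setminus\{i\}$ has the single entry $-\lambda_i\nu_j$, so $\nu_j=0$ at once. Equivalently: if $a,b$ are independent, supported on $Y$, and $a\wedge b\in\bigwedge\nolimits^2L_Y$, then contracting with the functional $\lambda\mapsto\sum_{k\in Y}\lambda_k$ gives $a,b\in L_Y$. Since $e_i\notin L_Y$, $\rho_Y(\nu)$ must be a multiple of $e_i$.

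\textbf{The cited shortcut.} The linearity and isotropy of all components comes from Libgober--Yuzvinsky and Falk--Yuzvinsky, whose setting is complex arrangements, whereas the lemma is stated for arbitrary simple matroids. Your fallback does not remove this dependence. Identifying $\ker d^1_\lambda$ with the tangent space of $\mathcal{R}_1$ at $\lambda$ is asserted, not proved, and its natural proof is again that the component through $\lambda$ is a linear $W$ with $W\subseteq\ker d^1_\lambda$.

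You can avoid the citation entirely with the same decomposability fact:
\begin{itemize}
\item[(1)] Let $N$ be the Zariski-open set of $\lambda$ with $\lambda_i\neq0$ for all $i\in X$ and $\sum_{k\in Y}\lambda_k\neq0$ for every line $Y\neq X$ meeting $X$. It contains your $U$.
\item[(2)] Suppose $\lambda\in N$ and $e_\lambda\wedge e_\mu\in\Im_2(\mathcal{M})$ with $\mu$ not proportional to $\lambda$. On every line $Y\neq X$ through $i\in X$ we have $\rho_Y(\lambda)\notin L_Y$, so $\rho_Y(\mu)=c_i\rho_Y(\lambda)$ with $c_i=\mu_i/\lambda_i$.
\item[(3)] If all $c_i$ are equal, then $\mu$ is proportional to $\lambda$, a contradiction.
\item[(4)] If $c_i\neq c_{i'}$, the lines $\overline{\{i,j\}}$ and $\overline{\{i',j\}}$ give $\lambda_j=0$ for every $j\notin X$. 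The $X$-component is then a nonzero decomposable element of $\bigwedge\nolimits^2L_X$, which gives $\lambda\in L_X$.
\item[(5)] Hence $\mathcal{R}_1(\mathrm{OS})\cap N\subseteq L_X$. Any irreducible closed $V$ with $L_X\subseteq V\subseteq\mathcal{R}_1(\mathrm{OS})$ has $V\cap N$ nonempty and open, hence dense, in $V$, so $V=L_X$.
\end{itemize}
The paper gives no proof of this lemma and only cites Falk, so your proposal needs a self-contained argument of this kind.
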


For a linear subspace $U\subseteq\mathbb{C}^n$, define its
\emph{support} by
\begin{equation*}
\operatorname{supp}(U)
=\bigl\{i\in[n]:
\text{there exists }\lambda\in U
\text{ such that }\lambda_i\neq 0\bigr\}.
\end{equation*}
For $S\subseteq[n]$, let $\mathcal{M}|S$ denote the restriction of
$\mathcal{M}$ to $S$, that is, the matroid on the ground set $S$
whose independent sets are precisely the independent sets of
$\mathcal{M}$ contained in $S$.
Let $L_2(\mathcal{M}|S)$ denote the set of all flats of rank $2$
in $\mathcal{M}|S$.
A partition $\Pi=\{\pi_1,\ldots,\pi_k\}$ of $S$ is called a
\emph{$k$-coloring} of $\mathcal{M}|S$, and each $\pi_\alpha$ is
called a \emph{color class}.
The coloring is called \emph{regular} if, for every
$Y\in L_2(\mathcal{M}|S)$, we have
\begin{equation*}
\bigl|\{\pi\in\Pi:\pi\cap Y\neq\varnothing\}\bigr|\in\{1,k\}.
\end{equation*}
Given the above partition and positive integers $m_i$ $(i\in S)$,
we use the natural identification $\mathbb{C}^n=\mathcal{E}_1$ and set
$u_\alpha=\sum_{i\in\pi_\alpha}m_i e_i$ for $1\leq\alpha\leq k$.

\begin{lemma}(\cite{FalkYuzvinsky2007,LibgoberYuzvinsky2000})
\label{lem:resonance-structure}
Let $\mathcal{M}$ be a simple matroid of rank $3$ on the ground set
$[n]$, and consider its Orlik--Solomon algebra over $\mathbb{C}$.
Then every positive-dimensional irreducible component $W$ of
$\mathcal{R}_1(\mathrm{OS})$ is a linear subspace, and
\begin{equation*}
h=\dim_{\mathbb{C}}W\geq 2,
\qquad
\bigwedge\nolimits^2 W\subseteq\Im_2(\mathcal{M}).
\end{equation*}
Let $S=\operatorname{supp}(W)$ and $k=h+1$.
Then there exist a regular $k$-coloring
$\Pi=\{\pi_1,\ldots,\pi_k\}$ of $\mathcal{M}|S$,
a positive integer $d$, and positive integers $m_i$ $(i\in S)$
such that $\sum_{i\in\pi_\alpha}m_i=d$ for $1\leq\alpha\leq k$,
and the corresponding vectors $u_1,\ldots,u_k$ satisfy
\begin{equation*}
W=\left\{
\sum_{\alpha=1}^{k}t_\alpha u_\alpha:
t_1,\ldots,t_k\in\mathbb{C},\,
\sum_{\alpha=1}^{k}t_\alpha=0
\right\}.
\end{equation*}
Moreover, for every $Y\in L_2(\mathcal{M}|S)$,
one of the following holds:
\begin{enumerate}
\item[\textup{(i)}]
$Y$ is contained in some color class $\pi_\alpha$;
\item[\textup{(ii)}]
$Y$ meets all $k$ color classes, and
\begin{equation*}
\sum_{i\in Y\cap\pi_1}m_i
=\cdots
=\sum_{i\in Y\cap\pi_k}m_i.
\end{equation*}
\end{enumerate}
\end{lemma}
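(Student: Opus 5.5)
The plan is to reduce membership in $\mathcal{R}_1(\mathrm{OS})$ to a purely combinatorial, flat-by-flat condition using Lemma~\ref{lem:local-decomposition} and Lemma~\ref{lem:local-relations}, and then to analyse the solution set. First, for $\lambda\neq 0$ one has $H^1(\mathrm{OS},e_\lambda)\neq 0$ iff there is $\mu$ not proportional to $\lambda$ with $e_\lambda\wedge e_\mu\in\Im_2(\mathcal{M})$. The main obstacle is the last step below, positivity of the multiplicities, which is where the cited results are genuinely needed.

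\textbf{Local reduction.} Write $\lambda_X$ for the vector obtained from $\lambda$ by zeroing all coordinates outside $X$. The $\mathcal{E}_2(X)$-component of $e_\lambda\wedge e_\mu$ in the decomposition of Lemma~\ref{lem:local-decomposition} is exactly $e_{\lambda_X}\wedge e_{\mu_X}$. Hence, by the direct sum $\Im_2(\mathcal{M})=\bigoplus_X\Im_2(X)$ and $\Im_2(X)=\bigwedge^2 L_X$, the condition becomes
\begin{equation*}
e_{\lambda_X}\wedge e_{\mu_X}\in\bigwedge\nolimits^2 L_X\quad\text{for every }X\in L_2(\mathcal{M}).
\end{equation*}
A decomposable $2$-vector lies in $\bigwedge^2 L_X$ iff it is zero or the plane it spans lies in $L_X$. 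So for each $X$, either $\lambda_X,\mu_X$ are proportional, or both lie in $L_X$.

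\textbf{Linearity and isotropy.} Following Libgober--Yuzvinsky, I would show the following.
\begin{enumerate}
\item[(a)] The set $W(\lambda)=\{\mu: e_\lambda\wedge e_\mu\in\Im_2\}$ is linear.
\item[(b)] For $\lambda$ generic in a component $W$, $W(\lambda)=W$, so $W$ is a linear subspace with $\bigwedge^2W\subseteq\Im_2(\mathcal{M})$ and $\dim W\geq 2$.
\end{enumerate}
The key tool is the symmetric bilinear form $\langle\lambda,\mu\rangle=\sum_{X\in L_2,|X|\ge 3}\lambda(X)\mu(X)-\sum_i\lambda_i\mu_i$, where $\lambda(X)=\sum_{i\in X}\lambda_i$. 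One shows components are isotropic for it, and that distinct components meet only at $0$. In rank three this is where the argument uses the fact that $\mathrm{OS}$ is determined in degrees $\le 3$ by its quadratic part.

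\textbf{Colorings.} Fix a basis $w_1,\dots,w_h$ of $W$ and, for $i\in S$, let $\xi_i=(w_1)_i,\dots,(w_h)_i$, a point of $\mathbb{C}^h\setminus 0$. Apply the local dichotomy with $\lambda,\mu\in W$ generic. For each $Y\in L_2(\mathcal{M}|S)$, either all $\xi_i$ ($i\in Y$) are proportional to one vector, or $W_Y\subseteq L_Y$. Group $S$ into classes $\pi_\alpha$ by the line $[\xi_i]$, and show there are exactly $k=h+1$ lines. The lines must be $k$ vectors $v_1,\dots,v_k$ spanning $\mathbb{C}^h$ with a single relation $\sum v_\alpha=0$; this comes from the condition $\sum_{i\in Y}\lambda_i=0$ on flats of the second type, which connect the classes. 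Writing $\xi_i=m_iv_{\alpha}$ for $i\in\pi_\alpha$ gives $W=\{\sum t_\alpha u_\alpha:\sum t_\alpha=0\}$. Condition (ii) and the equal sums $\sum_{i\in Y\cap\pi_\alpha}m_i$ then come from $\lambda(Y)=0$ for all $\lambda\in W$. Regularity and the dichotomy (i)/(ii) follow from the local dichotomy. Equality of the totals $d$ comes from applying (ii) over a point outside a class and summing over the flats through it.

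\textbf{Positivity of the $m_i$.} A priori the $m_i$ are only nonzero complex numbers. Showing they can be chosen positive integers, with equal class sums, is the content of Falk--Yuzvinsky's multinet theorem. I would try to prove rationality first, via the integrality of the system of linear equations defining $W$. For positivity I would fix a point $i$ and use that the flats through $i$ partition $S\setminus\{i\}$, comparing the multiplicities forced on each such flat.
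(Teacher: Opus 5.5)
The paper gives no proof of this lemma; it imports it from Libgober--Yuzvinsky and Falk--Yuzvinsky. Your local reduction is correct: for each flat, $\lambda_X,\mu_X$ are either proportional or both lie in $L_X$. Beyond that reduction, however, your sketch does not establish any of the substantive claims, and one ingredient is wrong.

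\textbf{Linearity and isotropy.} Linearity and $\bigwedge^2W\subseteq\Im_2(\mathcal{M})$ are only attributed, and the form you name cannot be the right tool. Take $X=\{1,2,3\}\in L_2(\mathcal{M})$ such that $1,2$ lie on no other rank-$2$ flat with at least three points; for instance $n=4$ with $\{1,2,3\}$ the only $3$-point line. Then $\lambda=e_1-e_2$ lies in the local component $L_X$, but your form gives $\langle\lambda,\lambda\rangle=\lambda(X)^2-\lambda_1^2-\lambda_2^2=-2$. So even local components are not isotropic for it. Your remark that $\mathrm{OS}$ is determined in degrees $\le 3$ by its quadratic part is also false: for $U_{3,4}$ one has $\Im_2=0$ but $\dim\mathrm{OS}_3=3<4$. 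It is irrelevant anyway, since $H^1$ only involves degrees $\le 2$.

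\textbf{The coloring step.} For a flat $Y$ meeting two direction classes, the dichotomy only yields $\sum_{i\in Y}\xi_i=0$ and $\dim\rho_Y(W)\ge 2$. It gives no upper bound on the number of classes, no control on how many classes $Y$ meets, and no injectivity of $\rho_Y|_W$. The claims ``exactly $h+1$ directions with the single relation $\sum v_\alpha=0$'' and ``every flat meeting two classes meets all of them'' are the structural heart of the theorem. They are exactly what Lemma~\ref{lem:projection} later uses to obtain $|X|\ge h+1$. Your sentence about flats of the second type ``connecting the classes'' does not prove them.

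\textbf{Circularity and positivity.} Your order of deductions is circular. Equality of the sums $\sum_{i\in Y\cap\pi_\alpha}m_i$ over $\alpha$ forces $Y$ to meet every class only if the common value is nonzero. Likewise, normalizing the relation among the $v_\alpha$ to $\sum v_\alpha=0$ needs all of its coefficients nonzero. Both facts rest on positivity of the $m_i$, which you address only afterwards. Positivity, with integrality, is then not proved but only announced as something you ``would try''. Rationality is not automatic either: a component of a variety defined over $\mathbb{Q}$ is a priori only defined over $\overline{\mathbb{Q}}$.

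If you intend to import positivity from the pencil-of-curves argument, where the $m_i$ are multiplicities of line components of completely reducible fibres, that argument presupposes a realization over $\mathbb{C}$. It would therefore not by itself cover non-realizable rank-$3$ matroids, which the paper's main theorem explicitly includes.

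As it stands, your proposal reduces to the same appeal to the literature that the paper makes. The intermediate reconstruction is either unproved or, in the case of the bilinear form, incorrect.
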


For a simple matroid $\mathcal{M}$ of rank $3$, denote by
$\mathscr{W}(\mathrm{OS})$ the set of all
positive-dimensional irreducible components of
$\mathcal{R}_1(\mathrm{OS})$.
According to Lemma~\ref{lem:resonance-structure}, every
$W\in\mathscr{W}(\mathrm{OS})$ is a linear subspace
satisfying $\dim_{\mathbb{C}}W\geq 2$.
For each $X\in L_2(\mathcal{M})$, define the coordinate projection
$\rho_X\colon\mathbb{C}^n\to\mathbb{C}^n$ by
\begin{equation*}
(\rho_X(\lambda))_i=
\begin{cases}
\lambda_i, & i\in X,\\
0, & i\notin X,
\end{cases}
\end{equation*}
where $\lambda\in\mathbb{C}^n$ and $i\in[n]$.
Under the natural identification of $\mathbb{C}^n$ with
$\mathcal{E}_1$ via $\lambda\mapsto e_\lambda$,
we use $\rho_X$ also to denote the corresponding linear map, namely,
$\rho_X(e_\lambda)
=e_{\rho_X(\lambda)}
=\sum_{i\in X}\lambda_i e_i$.
Using the direct sum decomposition in
Lemma~\ref{lem:local-decomposition}, define the projection
$\pi_X\colon\mathcal{E}_2\to\mathcal{E}_2(X)$ onto the direct summand
$\mathcal{E}_2(X)$ by
\begin{equation*}
\pi_X\left(\sum_{1\leq i<j\leq n}a_{ij}e_{ij}\right)
=\sum_{\substack{i,j\in X\\i<j}}a_{ij}e_{ij},
\end{equation*}
where $a_{ij}\in\mathbb{C}$.
This projection satisfies
$\pi_X(\Im_2(\mathcal{M}))=\Im_2(X)$.
By definition, for any $\lambda,\mu\in\mathbb{C}^n$, we have
\begin{equation*}
\pi_X(e_\lambda\wedge e_\mu)
=e_{\rho_X(\lambda)}\wedge e_{\rho_X(\mu)}.
\end{equation*}

\begin{lemma}\label{lem:projection}
Let $\mathcal{M}$ be a simple matroid of rank $3$ on the ground set
$[n]$, let $W\in\mathscr{W}(\mathrm{OS})$, and set
$h=\dim_{\mathbb{C}}W$.
Let $X\in L_2(\mathcal{M})$.
Then exactly one of the following holds:
\begin{enumerate}
\item[\textup{(i)}]
$\dim_{\mathbb{C}}\rho_X(W)\leq 1$ and
$\pi_X(\bigwedge\nolimits^2 W)=\{0\}$;
\item[\textup{(ii)}]
$\rho_X(W)\subseteq L_X$ and $\rho_X|_W$ is injective.
Moreover, $\pi_X|_{\bigwedge\nolimits^2 W}$ is also injective, and hence
\begin{equation*}
\dim_{\mathbb{C}}\pi_X(\bigwedge\nolimits^2 W)=\binom{h}{2},
\qquad
|X|\geq h+1.
\end{equation*}
\end{enumerate}
In particular, if $|X|\leq h$, then
$\pi_X(\bigwedge\nolimits^2 W)=\{0\}$.
Consequently,
\begin{equation}\label{eq:support}
\bigwedge\nolimits^2 W
\subseteq
\bigoplus_{\substack{X\in L_2(\mathcal{M})\\|X|\geq h+1}}
\Im_2(X).
\end{equation}
\end{lemma}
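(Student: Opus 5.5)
The plan is to use the identity $\pi_X(\bigwedge^2 W)=\bigwedge^2\rho_X(W)$ together with $\bigwedge^2 W\subseteq\Im_2(\mathcal{M})$ from Lemma~\ref{lem:resonance-structure}. Since $\pi_X(e_\lambda\wedge e_\mu)=e_{\rho_X(\lambda)}\wedge e_{\rho_X(\mu)}$, we get $\pi_X(\bigwedge^2 W)=\bigwedge^2 \rho_X(W)$ computed inside $\mathcal{E}_2(X)$. Moreover $\pi_X(\Im_2(\mathcal{M}))=\Im_2(X)=\bigwedge^2 L_X$ by Lemmas~\ref{lem:local-decomposition} and~\ref{lem:local-relations}. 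Hence
\[
\bigwedge\nolimits^2\rho_X(W)\subseteq\bigwedge\nolimits^2 L_X .
\]
Put $V=\rho_X(W)$. If $\dim V\leq 1$, then $\bigwedge^2 V=0$ and we are in case (i). The two cases are mutually exclusive, because in case (ii) injectivity gives $\dim V=h\geq 2$.

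Now assume $\dim V\geq 2$. The first step is to show $V\subseteq L_X$, using the standard fact that for subspaces $V,U$ with $\dim V\geq 2$, $\bigwedge^2V\subseteq\bigwedge^2U$ forces $V\subseteq U$. To see this, take $v\in V$ and choose $w\in V$ with $v,w$ independent; then $v\wedge w$ is a nonzero decomposable element of $\bigwedge^2 U$. A nonzero decomposable 2-vector $v\wedge w$ determines the plane $\mathrm{span}(v,w)$ as $\{x: x\wedge v\wedge w=0\}$. On the other hand, an element of $\bigwedge^2U$ has all its ``factors'' in $U$: writing $v\wedge w=\sum u_i\wedge u_j'$ with $u_i,u'_j\in U$ and contracting with a functional that vanishes on $U$ shows $v,w\in U$. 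Hence $V\subseteq L_X$.

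The main obstacle is injectivity of $\rho_X|_W$. Here I would use the explicit description from Lemma~\ref{lem:resonance-structure}. Since $\dim V\geq2$ and $V\neq0$, $X$ meets $S=\operatorname{supp}(W)$. I would argue that $X\cap S$ is a rank-2 flat $Y$ of $\mathcal{M}|S$ provided $|X\cap S|\geq 2$, and this holds because $\dim V\geq 2$ needs at least two coordinates. Apply the dichotomy of Lemma~\ref{lem:resonance-structure} to $Y$:
\begin{enumerate}
\item[(a)] If $Y\subseteq\pi_\alpha$, then $\rho_X(\sum t_\beta u_\beta)=t_\alpha\rho_X(u_\alpha)$, so $\dim V\leq1$, a contradiction.
\item[(b)] Hence $Y$ meets all $k$ classes. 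Then $\rho_X(\sum t_\beta u_\beta)=\sum_\beta t_\beta\rho_X(u_\beta)$. The vectors $\rho_X(u_\beta)$ have disjoint nonempty supports $Y\cap\pi_\beta$ and positive coefficients, so they are linearly independent. Thus $\sum t_\beta\rho_X(u_\beta)=0$ forces all $t_\beta=0$, and $\rho_X|_W$ is injective.
\end{enumerate}
It then follows that $\dim V=h$, and $\pi_X|_{\bigwedge^2W}$ is injective because it equals $\bigwedge^2(\rho_X|_W)$, the second exterior power of an injective linear map. So $\dim\pi_X(\bigwedge^2 W)=\binom h2$. Finally, $V\subseteq L_X$ together with $\dim L_X=|X|-1$ (Lemma~\ref{lem:local-relations}) gives $|X|\geq h+1$.

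For the consequences, if $|X|\leq h$ then case (ii) is impossible, so $\pi_X(\bigwedge^2W)=0$. Every $\omega\in\bigwedge^2W$ lies in $\Im_2(\mathcal{M})=\bigoplus_X\Im_2(X)$ and equals $\sum_X\pi_X(\omega)$ with $\pi_X(\omega)\in\Im_2(X)$. The components with $|X|\leq h$ vanish, which gives \eqref{eq:support}.
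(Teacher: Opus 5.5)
Your proof is correct, but it reaches the key containment $\rho_X(W)\subseteq L_X$ by a different route from the paper.

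\textbf{The paper's route.} The paper splits according to whether $Y=X\cap S$ meets one color class or at least two. In the latter case it invokes the balanced-weight clause (ii) of Lemma~\ref{lem:resonance-structure}, $\sum_{i\in Y\cap\pi_\alpha}m_i=\nu$, and computes directly
$\sum_{i\in X}(\rho_X(\lambda))_i=\nu\sum_\alpha t_\alpha=0$.
Injectivity then follows from the disjoint supports of the $v_\alpha$, exactly as in your step (b).

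\textbf{Your route.} You split by $\dim_{\mathbb{C}}\rho_X(W)$ instead. You obtain $\rho_X(W)\subseteq L_X$ from three ingredients:
\begin{itemize}
\item the isotropy $\bigwedge^2W\subseteq\Im_2(\mathcal{M})$;
\item the identity $\pi_X(\Im_2(\mathcal{M}))=\Im_2(X)=\bigwedge^2L_X$;
\item the exterior-algebra fact that $\bigwedge^2V\subseteq\bigwedge^2U$ with $\dim V\geq 2$ forces $V\subseteq U$.
\end{itemize}
Your contraction argument for the last fact is sound: for a functional $f$ vanishing on $U$, $\iota_f(v\wedge w)=f(v)w-f(w)v=0$, and independence of $v,w$ gives $f(v)=f(w)=0$.

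\textbf{What each buys.} Your argument never uses the equal-weight condition. The containment part therefore holds for any isotropic subspace, not just multinet components, and the coloring is needed only for injectivity. The paper's argument is a single explicit computation from the multinet data and needs no auxiliary linear algebra.

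The one step you only sketched, that $Y$ is a rank-$2$ flat of $\mathcal{M}|S$, is routine. Since $|Y|\geq 2$ and $Y\subseteq X$, we have $\overline{Y}=X$, so adding any $j\in S\setminus Y$ raises the rank. The paper writes this out.
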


\begin{proof}
According to Lemma~\ref{lem:resonance-structure}, we have $h\geq 2$.
Let $S=\operatorname{supp}(W)$ and $k=h+1$.
Choose the partition $\Pi=\{\pi_1,\ldots,\pi_k\}$ and the positive
integers $m_i$ $(i\in S)$ given by
Lemma~\ref{lem:resonance-structure}, and write
$u_\alpha=\sum_{i\in\pi_\alpha}m_i e_i$.
Put $Y=X\cap S$ and
\begin{equation*}
v_\alpha
=\rho_X(u_\alpha)
=\sum_{i\in Y\cap\pi_\alpha}m_i e_i,
\end{equation*}
where $1\leq\alpha\leq k$.

Assume first that $Y$ meets at most one color class.
Then at most one of the vectors $v_1,\ldots,v_k$ is nonzero.
Let $\lambda\in W$.
By the representation of $W$, there exist
$t_1,\ldots,t_k\in\mathbb{C}$ such that
$\sum_{\alpha=1}^{k}t_\alpha=0$ and
$e_\lambda=\sum_{\alpha=1}^{k}t_\alpha u_\alpha$.
Hence,
\begin{equation*}
e_{\rho_X(\lambda)}
=\rho_X(e_\lambda)
=\sum_{\alpha=1}^{k}t_\alpha\rho_X(u_\alpha)
=\sum_{\alpha=1}^{k}t_\alpha v_\alpha.
\end{equation*}
Thus, $\rho_X(W)$ is contained in a subspace spanned by
at most one nonzero vector. Consequently,
\begin{equation*}
\dim_{\mathbb{C}}\rho_X(W)\leq 1.
\end{equation*}
It follows that, for any $\lambda,\mu\in W$, the vectors
$\rho_X(\lambda)$ and $\rho_X(\mu)$ are linearly dependent.
Therefore,
\begin{equation*}
\pi_X(e_\lambda\wedge e_\mu)
=e_{\rho_X(\lambda)}\wedge e_{\rho_X(\mu)}
=0.
\end{equation*}
By the definition of $\bigwedge\nolimits^2 W$, the exterior products $e_\lambda\wedge e_\mu$
with $\lambda,\mu\in W$ span $\bigwedge\nolimits^2 W$.
The linearity of $\pi_X$ therefore yields
$\pi_X(\bigwedge\nolimits^2 W)=\{0\}$.
Hence, \textup{(i)} holds.

Suppose now that $Y$ meets at least two color classes.
Then $|Y|\geq 2$.
Since $\mathcal{M}$ is simple and $Y\subseteq X$, we have
\begin{equation*}
2\leq\operatorname{rk}_{\mathcal{M}}(Y)
\leq\operatorname{rk}_{\mathcal{M}}(X)=2.
\end{equation*}
Thus, $\operatorname{rk}_{\mathcal{M}}(Y)=2$.
Because $\overline{Y}\subseteq X$ and both $\overline{Y}$ and $X$
are flats of rank $2$, it follows that $\overline{Y}=X$.
For every $j\in S\setminus Y$, we have
$j\notin X=\overline{Y}$, and hence
\begin{equation*}
\operatorname{rk}_{\mathcal{M}|S}(Y\cup\{j\})
=\operatorname{rk}_{\mathcal{M}}(Y\cup\{j\})
>2
=\operatorname{rk}_{\mathcal{M}|S}(Y).
\end{equation*}
Therefore, $Y\in L_2(\mathcal{M}|S)$.
According to Lemma~\ref{lem:resonance-structure} and the assumption that $Y$
meets at least two color classes, $Y$ meets all $k$ color classes,
and there exists a positive integer $\nu$ such that
$\sum_{i\in Y\cap\pi_\alpha}m_i=\nu$ for $1\leq\alpha\leq k$.
Since $m_i>0$, the vectors $v_1,\ldots,v_k$ are nonzero and have
pairwise disjoint supports.
They are therefore linearly independent.
Let $\lambda\in W$.
According to the representation of $W$, there exist
$t_1,\ldots,t_k\in\mathbb{C}$ such that
\begin{equation*}
e_\lambda=\sum_{\alpha=1}^{k}t_\alpha u_\alpha,
\qquad
\sum_{\alpha=1}^{k}t_\alpha=0.
\end{equation*}
Consequently,
$e_{\rho_X(\lambda)}=\sum_{\alpha=1}^{k}t_\alpha v_\alpha$ and
\begin{equation*}
\begin{aligned}
\sum_{i\in X}(\rho_X(\lambda))_i
&=\sum_{\alpha=1}^{k}t_\alpha
  \sum_{i\in Y\cap\pi_\alpha}m_i\\
&=\nu\sum_{\alpha=1}^{k}t_\alpha\\
&=0.
\end{aligned}
\end{equation*}
Since all coordinates of $\rho_X(\lambda)$ outside $X$ vanish,
we obtain $\rho_X(W)\subseteq L_X$.
If $\rho_X(\lambda)=0$, then the linear independence of
$v_1,\ldots,v_k$ forces $t_1=\cdots=t_k=0$,
and hence $\lambda=0$.
Therefore, $\rho_X|_W$ is injective.

Select a basis $w_1,\ldots,w_h$ of $W$.
Since $\rho_X|_W$ is injective, the vectors
$\rho_X(w_1),\ldots,\rho_X(w_h)$ are linearly independent.
Consequently, the exterior products
$\{e_{\rho_X(w_a)}\wedge e_{\rho_X(w_b)}:
1\leq a<b\leq h\}$ are linearly independent as well.
In addition,
$\{e_{w_a}\wedge e_{w_b}:1\leq a<b\leq h\}$
is a basis of $\bigwedge\nolimits^2 W$, and
\begin{equation*}
\pi_X(e_{w_a}\wedge e_{w_b})
=e_{\rho_X(w_a)}\wedge e_{\rho_X(w_b)}
\end{equation*}
for $1\leq a<b\leq h$.
Therefore, $\pi_X|_{\bigwedge\nolimits^2 W}$ is injective, so that
\begin{equation*}
\dim_{\mathbb{C}}\pi_X(\bigwedge\nolimits^2 W)
=\dim_{\mathbb{C}}\bigwedge\nolimits^2 W
=\binom{h}{2}.
\end{equation*}
Moreover, the injectivity of $\rho_X|_W$, combined with
Lemma~\ref{lem:local-relations}, gives
\begin{equation*}
h=\dim_{\mathbb{C}}\rho_X(W)
\leq\dim_{\mathbb{C}}L_X
=|X|-1.
\end{equation*}
Hence, $|X|\geq h+1$, and \textup{(ii)} holds.
Since $h\geq 2$, case \textup{(ii)} implies
$\dim_{\mathbb{C}}\rho_X(W)=h>1$.
Thus, the two cases are mutually exclusive.

If $|X|\leq h$, then \textup{(ii)} is impossible, so
$\pi_X(\bigwedge\nolimits^2 W)=\{0\}$.
According to Lemma~\ref{lem:resonance-structure}, we have
$\bigwedge\nolimits^2 W\subseteq\Im_2(\mathcal{M})$ as well.
Combining this inclusion with the direct sum decomposition of
$\Im_2(\mathcal{M})$ in Lemma~\ref{lem:local-decomposition},
we obtain
\begin{equation*}
\bigwedge\nolimits^2 W
\subseteq
\bigoplus_{\substack{X\in L_2(\mathcal{M})\\|X|\geq h+1}}
\Im_2(X).
\end{equation*}
This proves \eqref{eq:support} and completes the proof.
\end{proof}

Let $\mathcal{M}$ be a simple matroid of rank $3$ on the ground set $[n]$.
For each integer $p\geq 2$, let
$c_p(\mathcal{M})
=\bigl|\{X\in L_2(\mathcal{M}):|X|=p\}\bigr|$
denote the number of flats of rank $2$ in $\mathcal{M}$ with $p$ elements.
For each integer $d\geq 2$, define
\begin{equation*}
F_d(\mathrm{OS})
=\sum_{\substack{W\in\mathscr{W}(\mathrm{OS})\\
\dim_{\mathbb{C}}W\geq d}}
\bigwedge\nolimits^2 W.
\end{equation*}
According to Lemma~\ref{lem:resonance-structure}, we have
$\bigwedge\nolimits^2 W\subseteq\Im_2(\mathcal{M})$
for every $W\in\mathscr{W}(\mathrm{OS})$.
Together with the preceding definition, this gives
$F_{d+1}(\mathrm{OS})\subseteq F_d(\mathrm{OS})
\subseteq\Im_2(\mathcal{M})$
for every integer $d\geq 2$.

\begin{lemma}\label{lem:filtration}
Let $\mathcal{M}$ be a simple matroid of rank $3$ on the ground set $[n]$.
Then, for every integer $d\geq 2$, we have
\begin{equation}\label{eq:filtration-identity}
F_d(\mathrm{OS})
=\bigoplus_{\substack{X\in L_2(\mathcal{M})\\|X|\geq d+1}}
\Im_2(X),
\end{equation}
and
\begin{equation}\label{eq:filtration-dimension}
\dim_{\mathbb{C}}F_d(\mathrm{OS})
=\sum_{p\geq d+1}\binom{p-1}{2}c_p(\mathcal{M}).
\end{equation}
\end{lemma}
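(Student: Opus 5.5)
We prove the identity \eqref{eq:filtration-identity} by establishing the two inclusions separately. The dimension formula \eqref{eq:filtration-dimension} then follows at once.

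\emph{The inclusion ``$\subseteq$''.} Let $W\in\mathscr{W}(\mathrm{OS})$ with $h=\dim_{\mathbb{C}}W\geq d$. By \eqref{eq:support} in Lemma~\ref{lem:projection},
\begin{equation*}
\bigwedge\nolimits^2 W\subseteq\bigoplus_{|X|\geq h+1}\Im_2(X).
\end{equation*}
Since $h\geq d$, every flat with $|X|\geq h+1$ also satisfies $|X|\geq d+1$, so the right-hand side lies in $\bigoplus_{|X|\geq d+1}\Im_2(X)$. Summing over all such $W$ gives $F_d(\mathrm{OS})\subseteq\bigoplus_{X\in L_2(\mathcal{M}),\,|X|\geq d+1}\Im_2(X)$.

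\emph{The inclusion ``$\supseteq$''.} Fix $X\in L_2(\mathcal{M})$ with $|X|\geq d+1\geq 3$. By Lemma~\ref{lem:local-resonance}, $L_X$ is an irreducible component of $\mathcal{R}_1(\mathrm{OS})$ of dimension $|X|-1\geq d$, so in particular $L_X\in\mathscr{W}(\mathrm{OS})$. It therefore contributes the summand $\bigwedge\nolimits^2 L_X$ to $F_d(\mathrm{OS})$. By Lemma~\ref{lem:local-relations}, $\bigwedge\nolimits^2 L_X=\Im_2(X)$, so $\Im_2(X)\subseteq F_d(\mathrm{OS})$. Summing over all such $X$ gives the reverse inclusion. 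That the resulting sum is direct, and not merely a sum, is already guaranteed by Lemma~\ref{lem:local-decomposition}.

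\emph{The dimension formula.} Since the sum in \eqref{eq:filtration-identity} is direct, the dimensions add. Lemma~\ref{lem:local-relations} gives $\dim_{\mathbb{C}}\Im_2(X)=\binom{|X|-1}{2}$. Grouping the flats $X$ by their size $p=|X|$, each size $p\geq d+1$ contributes $c_p(\mathcal{M})$ copies of $\binom{p-1}{2}$, which is exactly \eqref{eq:filtration-dimension}.

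\emph{The main point of care.} The only delicate step is in the first inclusion: nonlocal components could a priori contribute relations supported on flats of size at most $d$. Lemma~\ref{lem:projection} rules this out, since it shows that a component of dimension $h$ has vanishing projection onto every flat of size at most $h$. With that lemma available, the rest of the argument is bookkeeping.
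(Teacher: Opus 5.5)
Your proposal is correct and follows the paper's argument essentially step for step. You obtain $\subseteq$ from \eqref{eq:support} in Lemma~\ref{lem:projection} together with $h\geq d$, and $\supseteq$ from the local components $L_X$ of Lemma~\ref{lem:local-resonance} combined with $\bigwedge\nolimits^2 L_X=\Im_2(X)$ from Lemma~\ref{lem:local-relations}; the dimension count then uses the directness from Lemma~\ref{lem:local-decomposition} and $\dim_{\mathbb{C}}\Im_2(X)=\binom{|X|-1}{2}$, exactly as the paper does.
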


\begin{proof}
Fix an integer $d\geq 2$.
We begin by proving the formula \eqref{eq:filtration-identity}.
Let $W\in\mathscr{W}(\mathrm{OS})$ satisfy
$h=\dim_{\mathbb{C}}W\geq d$.
According to Lemma~\ref{lem:projection}, we have
\begin{equation*}
\bigwedge\nolimits^2 W
\subseteq
\bigoplus_{\substack{X\in L_2(\mathcal{M})\\|X|\geq h+1}}
\Im_2(X)
\subseteq
\bigoplus_{\substack{X\in L_2(\mathcal{M})\\|X|\geq d+1}}
\Im_2(X).
\end{equation*}
By summing over all such $W$ and invoking the definition
of $F_d(\mathrm{OS})$, we obtain
\begin{equation*}
F_d(\mathrm{OS})
\subseteq
\bigoplus_{\substack{X\in L_2(\mathcal{M})\\|X|\geq d+1}}
\Im_2(X).
\end{equation*}

Conversely, let $X\in L_2(\mathcal{M})$ satisfy $|X|\geq d+1$.
Since $d\geq 2$, we have $|X|\geq 3$.
According to Lemma~\ref{lem:local-resonance},
we have $L_X\in\mathscr{W}(\mathrm{OS})$ and
$\dim_{\mathbb{C}}L_X=|X|-1\geq d$.
Therefore, the definition of $F_d(\mathrm{OS})$ together with
Lemma~\ref{lem:local-relations} yields
\begin{equation*}
\Im_2(X)=\bigwedge\nolimits^2 L_X\subseteq F_d(\mathrm{OS}).
\end{equation*}
It follows that $\Im_2(X)\subseteq F_d(\mathrm{OS})$ whenever
$X\in L_2(\mathcal{M})$ and $|X|\geq d+1$.
As $F_d(\mathrm{OS})$ is a linear subspace, it contains the sum
of these subspaces.
Moreover, Lemma~\ref{lem:local-decomposition} shows that this sum is direct.
Thus,
\begin{equation*}
\bigoplus_{\substack{X\in L_2(\mathcal{M})\\|X|\geq d+1}}
\Im_2(X)
=
\sum_{\substack{X\in L_2(\mathcal{M})\\|X|\geq d+1}}
\Im_2(X)
\subseteq F_d(\mathrm{OS}).
\end{equation*}
This proves the reverse inclusion and establishes the identity
\eqref{eq:filtration-identity}.

According to Lemma~\ref{lem:local-decomposition}, the subspaces $\Im_2(X)$
associated with distinct flats constitute a direct sum.
Using this together with Lemma~\ref{lem:local-relations} and the
definition of $c_p(\mathcal{M})$, we obtain
\begin{equation*}
\begin{aligned}
\dim_{\mathbb{C}}F_d(\mathrm{OS})
&=\sum_{\substack{X\in L_2(\mathcal{M})\\|X|\geq d+1}}
  \dim_{\mathbb{C}}\Im_2(X)\\
&=\sum_{\substack{X\in L_2(\mathcal{M})\\|X|\geq d+1}}
  \binom{|X|-1}{2}\\
&=\sum_{p\geq d+1}\binom{p-1}{2}c_p(\mathcal{M}).
\end{aligned}
\end{equation*}
This proves the formula \eqref{eq:filtration-dimension}.
\end{proof}

\begin{lemma}\label{lem:recovery}
Let $\mathcal{M}$ be a simple matroid of rank $3$ on the ground set $[n]$.
Then $n=\dim_{\mathbb{C}}\mathrm{OS}_1$ and, for every integer $p\geq 3$,
\begin{equation}\label{eq:recover}
c_p(\mathcal{M})
=\frac{
\dim_{\mathbb{C}}F_{p-1}(\mathrm{OS})
-\dim_{\mathbb{C}}F_p(\mathrm{OS})
}{
\binom{p-1}{2}
}.
\end{equation}
Moreover,
\begin{equation}\label{eq:recover-two}
c_2(\mathcal{M})
=\binom{n}{2}
-\sum_{p=3}^{n-1}\binom{p}{2}c_p(\mathcal{M}).
\end{equation}
If two simple matroids of rank $3$ have isomorphic graded
Orlik--Solomon $\mathbb{C}$-algebras, then their values of $c_p$
are equal for every integer $p\geq 2$.
\end{lemma}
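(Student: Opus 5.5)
The identity $n=\dim_{\mathbb{C}}\mathrm{OS}_1$ is exactly the first statement of Lemma~\ref{lem:rank}, so we quote it. For \eqref{eq:recover}, fix $p\geq 3$ and apply \eqref{eq:filtration-dimension} of Lemma~\ref{lem:filtration} with $d=p-1\geq 2$ and with $d=p$. Subtracting gives
\begin{equation*}
\dim_{\mathbb{C}}F_{p-1}(\mathrm{OS})-\dim_{\mathbb{C}}F_p(\mathrm{OS})
=\sum_{q\geq p}\binom{q-1}{2}c_q(\mathcal{M})-\sum_{q\geq p+1}\binom{q-1}{2}c_q(\mathcal{M})
=\binom{p-1}{2}c_p(\mathcal{M}).
\end{equation*}
Since $p\geq 3$, we have $\binom{p-1}{2}\geq 1$, so we may divide. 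All sums are finite because $c_q(\mathcal{M})=0$ for $q>n$.

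For \eqref{eq:recover-two}, we count unordered pairs. By the first part of the proof of Lemma~\ref{lem:local-decomposition}, each $2$-subset of $[n]$ lies in exactly one flat of rank $2$. Hence $\binom{n}{2}=\sum_{X\in L_2(\mathcal{M})}\binom{|X|}{2}=\sum_{p\geq 2}\binom{p}{2}c_p(\mathcal{M})$. We then isolate the $p=2$ term. It remains to justify the upper limit $n-1$. A rank-$2$ flat $X$ with $|X|=n$ would equal $[n]$, contradicting $r=3$. So $c_p(\mathcal{M})=0$ for $p\geq n$.

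For the final assertion, let $\varphi\colon\mathrm{OS}(\mathcal{M})\to\mathrm{OS}(\mathcal{N})$ be a graded isomorphism. Then $\varphi_1\colon\mathrm{OS}_1(\mathcal{M})\to\mathrm{OS}_1(\mathcal{N})$ is a linear isomorphism, so $n$ is equal for both matroids. The key step, and the main obstacle, is to show that $F_d$ is intrinsic. I would identify $\mathrm{OS}_1$ with $\mathbb{C}^n$ and view $\varphi_1$ as a linear isomorphism $g\colon\mathbb{C}^n\to\mathbb{C}^{n}$. Because $\varphi$ is multiplicative and degree-preserving, $\varphi$ restricts to an isomorphism of the cochain complexes $(\mathrm{OS}(\mathcal{M}),e_\lambda)\cong(\mathrm{OS}(\mathcal{N}),e_{g(\lambda)})$. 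Hence $H^1$ is preserved, $g(\mathcal{R}_1(\mathrm{OS}(\mathcal{M})))=\mathcal{R}_1(\mathrm{OS}(\mathcal{N}))$, and $g$ carries irreducible components to irreducible components of the same dimension.

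Next, the multiplication map $\mathcal{E}_2=\bigwedge^2\mathcal{E}_1\to\mathrm{OS}_2$ has kernel $\Im_2$, because $\mathrm{OS}$ is generated in degree $1$. The map $\bigwedge^2 g\colon\mathcal{E}_2\to\mathcal{E}_2$ is a linear isomorphism compatible with $\varphi_2$, so it sends $\Im_2(\mathcal{M})$ onto $\Im_2(\mathcal{N})$. Moreover, $\bigwedge^2 g$ sends $\bigwedge^2 W$ to $\bigwedge^2 g(W)$. Therefore $\bigwedge^2 g$ maps $F_d(\mathrm{OS}(\mathcal{M}))$ isomorphically onto $F_d(\mathrm{OS}(\mathcal{N}))$, and their dimensions agree for every $d\geq 2$.

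Note that the exterior products defining $F_d$ are taken in $\mathcal{E}$, not in $\mathrm{OS}$. This is why we work with $\bigwedge^2 g$ on $\mathcal{E}_2$ rather than with $\varphi_2$ itself. Once the dimensions of $F_d$ agree, \eqref{eq:recover} gives equal $c_p$ for every $p\geq 3$, and \eqref{eq:recover-two} together with the equality of $n$ gives equal $c_2$.
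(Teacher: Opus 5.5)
Your proposal is correct and follows essentially the same route as the paper. Like the paper, you quote Lemma~\ref{lem:rank}, subtract the dimension formula of Lemma~\ref{lem:filtration} at $d=p-1$ and $d=p$, count $2$-subsets by their rank-$2$ closures, and transport the components in $\mathscr{W}$ and the spaces $F_d$ via $g=\varphi|_{\mathrm{OS}_1}$ and $\bigwedge^2 g$. Your extra observation that $\bigwedge^2 g$ carries $\Im_2(\mathcal{M})$ onto $\Im_2(\mathcal{N})$ is true (the kernel of $\mathcal{E}_2\to\mathrm{OS}_2$ is $\Im_2$ by the definition of the quotient), but it is not needed, since $F_d$ is defined directly as a sum of the spaces $\bigwedge^2 W$.
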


\begin{proof}
We begin by proving the two recovery formulas.
According to Lemma~\ref{lem:rank}, we have
$n=\dim_{\mathbb{C}}\mathrm{OS}_1$.
Since $\mathcal{M}$ has rank $3$, every $X\in L_2(\mathcal{M})$
is a proper subset of $[n]$, and hence $|X|\leq n-1$.
Thus, $c_s(\mathcal{M})=0$ for $s\geq n$, so all sums below
contain only finitely many nonzero terms.
Fix an integer $p\geq 3$.
Since $p-1\geq 2$, applying the formula
\eqref{eq:filtration-dimension} in Lemma~\ref{lem:filtration}
with $d=p-1$ and $d=p$, respectively, yields
\begin{equation*}
\begin{aligned}
&\dim_{\mathbb{C}}F_{p-1}(\mathrm{OS})=\sum_{s\geq p}\binom{s-1}{2}c_s(\mathcal{M}),\\
&\dim_{\mathbb{C}}F_p(\mathrm{OS})=\sum_{s\geq p+1}\binom{s-1}{2}c_s(\mathcal{M}).
\end{aligned}
\end{equation*}
Subtracting these formulas yields
\begin{equation*}
\dim_{\mathbb{C}}F_{p-1}(\mathrm{OS})
-\dim_{\mathbb{C}}F_p(\mathrm{OS})
=\binom{p-1}{2}c_p(\mathcal{M}).
\end{equation*}
Since $\binom{p-1}{2}>0$, dividing both sides by
$\binom{p-1}{2}$ proves the formula \eqref{eq:recover}.

We now turn to the proof of formula \eqref{eq:recover-two}.
Since $\mathcal{M}$ is simple, every two-element subset is of rank $2$,
and its closure is the unique flat of rank $2$ that contains it.
Consequently, the two-element subsets of $[n]$ can be grouped
according to their closures.
For each $X\in L_2(\mathcal{M})$, the resulting class consists
precisely of the two-element subsets of $X$ and therefore has
cardinality $\binom{|X|}{2}$.
By the definition of $c_p(\mathcal{M})$ and the bounds
$2\leq |X|\leq n-1$, we deduce
\begin{equation*}
\begin{aligned}
\binom{n}{2}
&=\sum_{X\in L_2(\mathcal{M})}\binom{|X|}{2}\\
&=\sum_{p=2}^{n-1}\binom{p}{2}c_p(\mathcal{M})\\
&=c_2(\mathcal{M})
  +\sum_{p=3}^{n-1}\binom{p}{2}c_p(\mathcal{M}).
\end{aligned}
\end{equation*}
Rearranging gives the formula \eqref{eq:recover-two}.

We conclude by proving the claim on graded isomorphisms.
Let $\mathcal{N}$ be a simple matroid of rank $3$ on the ground set
$[m]$, and assume that
$\varphi\colon\mathrm{OS}(\mathcal{M})\to\mathrm{OS}(\mathcal{N})$
is a graded $\mathbb{C}$-algebra isomorphism.
Let $g=\varphi|_{\mathrm{OS}_1(\mathcal{M})}$.
With the natural identifications of the homogeneous components of degree $1$
and the corresponding coordinate spaces, we treat $g$ as a linear
isomorphism from $\mathbb{C}^n$ to $\mathbb{C}^m$.
Hence $n=m$ and $\varphi(e_\lambda)=e_{g(\lambda)}$.
For all $\lambda\in\mathbb{C}^n$, $q\geq 0$, and
$a\in\mathrm{OS}_q(\mathcal{M})$, we have
\begin{equation*}
\varphi(e_\lambda a)
=\varphi(e_\lambda)\varphi(a)
=e_{g(\lambda)}\varphi(a).
\end{equation*}
It follows that $\varphi$ induces an isomorphism between the corresponding
cochain complexes, so
\begin{equation*}
H^1(\mathrm{OS}(\mathcal{M}),e_\lambda)
\cong
H^1(\mathrm{OS}(\mathcal{N}),e_{g(\lambda)}).
\end{equation*}
According to the definition of the resonance varieties, we obtain
\begin{equation*}
g\bigl(\mathcal{R}_1(\mathrm{OS}(\mathcal{M}))\bigr)
=\mathcal{R}_1(\mathrm{OS}(\mathcal{N})).
\end{equation*}
Because $g$ and its inverse are both linear maps, the restriction of $g$
gives an isomorphism of algebraic varieties between these
resonance varieties.
Therefore, the map $W\mapsto g(W)$ provides a
dimension-preserving bijection between
$\mathscr{W}(\mathrm{OS}(\mathcal{M}))$ and
$\mathscr{W}(\mathrm{OS}(\mathcal{N}))$.
Denote by
$\bigwedge\nolimits^2 g:
 \bigwedge\nolimits^2\mathbb{C}^n
 \to\bigwedge\nolimits^2\mathbb{C}^m$
the induced linear map, defined by
\begin{equation*}
(\bigwedge\nolimits^2 g)(\lambda\wedge\mu)
=g(\lambda)\wedge g(\mu)
\quad(\lambda,\mu\in\mathbb{C}^n).
\end{equation*}
For every $W\in\mathscr{W}(\mathrm{OS}(\mathcal{M}))$,
we have
\begin{equation*}
(\bigwedge\nolimits^2 g)(\bigwedge\nolimits^2 W)
=\bigwedge\nolimits^2(g(W)).
\end{equation*}
Combining this dimension-preserving bijection with the definition of $F_d$,
we deduce, for any integer $d\geq 2$,
\begin{equation*}
\begin{aligned}
(\bigwedge\nolimits^2 g)
\bigl(F_d(\mathrm{OS}(\mathcal{M}))\bigr)
&=\sum_{\substack{
W\in\mathscr{W}(\mathrm{OS}(\mathcal{M}))\\
\dim_{\mathbb{C}}W\geq d}}
\bigwedge\nolimits^2(g(W))\\
&=F_d(\mathrm{OS}(\mathcal{N})).
\end{aligned}
\end{equation*}
Since $\bigwedge\nolimits^2 g$ is a linear isomorphism, it follows that
\begin{equation*}
\dim_{\mathbb{C}}F_d(\mathrm{OS}(\mathcal{M}))
=\dim_{\mathbb{C}}F_d(\mathrm{OS}(\mathcal{N}))
\end{equation*}
for every $d\geq 2$.
Together with $n=m$, the formulas \eqref{eq:recover} and
\eqref{eq:recover-two} now imply that
$c_p(\mathcal{M})=c_p(\mathcal{N})$ for every $p\geq 2$.
\end{proof}

To express the Tutte polynomial in terms of the numbers of flats,
we next give a reconstruction formula for matroids of rank $3$.
For integers $r,n$ with $0\leq r\leq n$, let $U_{r,n}$ denote
the uniform matroid of rank $r$ on the ground set $[n]$,
whose independent sets are precisely the subsets of $[n]$
of cardinality at most $r$.
Thus, for every $A\subseteq[n]$, we have
$\operatorname{rk}_{U_{r,n}}(A)=\min\{|A|,r\}$.

\begin{lemma}\label{lem:Tutte}
Let $\mathcal{M}$ be a simple matroid of rank $3$ on the ground set $[n]$.
Then
\begin{equation}\label{eq:tutte-rankthree}
T_{\mathcal{M}}(x,y)
=T_{U_{3,n}}(x,y)
+(xy-x-y)\sum_{p=3}^{n-1}c_p(\mathcal{M})
\sum_{j=3}^{p}\binom{p}{j}(y-1)^{j-3},
\end{equation}
where
\begin{equation}\label{eq:uniform-three}
T_{U_{3,n}}(x,y)
=x^3+(n-3)x^2+\binom{n-2}{2}x
+\sum_{j=1}^{n-3}\binom{n-j-1}{2}y^j.
\end{equation}
All empty sums are understood to be $0$.
In particular, when $n=3$, we have $T_{U_{3,3}}(x,y)=x^3$.
\end{lemma}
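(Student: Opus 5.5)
The plan is to work directly from the subset expansion \eqref{eq:tutte-subset-expansion} and compare $\mathcal{M}$ with $U_{3,n}$ subset by subset. Since $\mathcal{M}$ is simple of rank $3$, every $A\subseteq[n]$ with $|A|\leq 2$ is independent, so $\operatorname{rk}_{\mathcal{M}}(A)=|A|=\operatorname{rk}_{U_{3,n}}(A)$. Hence only subsets with $|A|\geq 3$ can contribute differently. For $|A|\geq 3$ we have $\operatorname{rk}_{\mathcal{M}}(A)\in\{2,3\}$, while $\operatorname{rk}_{U_{3,n}}(A)=3$. So the two expansions differ exactly on the sets $A$ with $|A|\geq 3$ and $\operatorname{rk}_{\mathcal{M}}(A)=2$.

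The first step is to parametrize these sets. A set $A$ with $|A|\geq3$ and rank $2$ has closure $\overline{A}$ equal to a unique flat $X\in L_2(\mathcal{M})$. Conversely, every subset $A\subseteq X$ with $|A|\geq 3$ has rank $2$, by simplicity and the bound $\operatorname{rk}(A)\leq\operatorname{rk}(X)=2$. So the sets in question are partitioned according to $X$, and for a flat $X$ of size $p$ there are $\binom{p}{j}$ of them with $|A|=j$, where $3\leq j\leq p$. Only flats with $p\geq3$ contribute, and $p\leq n-1$ as in the proof of Lemma~\ref{lem:recovery}.

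For such an $A$ with $|A|=j$, the $\mathcal{M}$-term is $(x-1)^{1}(y-1)^{j-2}$ and the $U_{3,n}$-term is $(y-1)^{j-3}$. Their difference is
\begin{equation*}
(y-1)^{j-3}\bigl((x-1)(y-1)-1\bigr)=(xy-x-y)(y-1)^{j-3}.
\end{equation*}
Summing over $j$ and over flats, grouped by $c_p(\mathcal{M})$, yields exactly the correction term in \eqref{eq:tutte-rankthree}.

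The remaining step, which I expect to be the only real computation, is the closed form \eqref{eq:uniform-three} for $T_{U_{3,n}}$. The subset expansion gives
\begin{equation*}
T_{U_{3,n}}(x,y)=(x-1)^3+n(x-1)^2+\binom n2(x-1)+\sum_{k\geq3}\binom nk(y-1)^{k-3}.
\end{equation*}
I would avoid direct expansion and instead use the standard basis-activity description: a basis $B$ contributes $x^{i(B)}y^{e(B)}$. Alternatively, one can verify the formula by deletion--contraction in $n$, using
\begin{equation*}
T_{U_{3,n}}=T_{U_{3,n-1}}+T_{U_{2,n-1}}
\end{equation*}
for $n\geq4$, together with the known $T_{U_{2,m}}=x^2+(m-2)x+\sum_{j=1}^{m-2}(m-j-1)y^j$. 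The base case is $T_{U_{3,3}}=x^3$. The induction then reduces to Pascal-type identities, namely $\binom{n-3}{2}+(n-3)=\binom{n-2}{2}$ and $\binom{n-j-2}{2}+(n-j-2)=\binom{n-j-1}{2}$, with care at the boundary term $j=n-3$. Finally, I would check that the empty-sum conventions give $T_{U_{3,3}}=x^3$ when $n=3$.
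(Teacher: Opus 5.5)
Your proposal is correct. Its first half is exactly the paper's argument: compare the subset expansions of $\mathcal{M}$ and $U_{3,n}$ term by term, isolate the sets with $|A|\geq 3$ and $\operatorname{rk}_{\mathcal{M}}(A)=2$, compute the difference $(xy-x-y)(y-1)^{j-3}$, and group these sets by their closures in $L_2(\mathcal{M})$.

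Where you differ is the closed form \eqref{eq:uniform-three}. The paper stays inside the subset expansion. It writes $T_{U_{3,n}}=(x-1)^3+n(x-1)^2+\binom{n}{2}(x-1)+Q_n(y)$ with $Q_m(y)=\sum_{j=3}^{m}\binom{m}{j}(y-1)^{j-3}$. It then derives $Q_{m+1}=yQ_m+\binom{m}{2}$ from Pascal's rule and solves it to get $Q_n=\sum_{j=0}^{n-3}\binom{n-j-1}{2}y^j$. Finally it absorbs the constant $\binom{n-1}{2}$ through the identity $(x-1)^3+n(x-1)^2+\binom{n}{2}(x-1)+\binom{n-1}{2}=x^3+(n-3)x^2+\binom{n-2}{2}x$.

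Your deletion--contraction induction also works. For $n\geq 4$ no element of $U_{3,n}$ is a coloop, and $U_{3,n}/e\cong U_{2,n-1}$. Your two Pascal identities give the coefficients of $x$ and of $y^j$ for $j\leq n-4$. The top coefficient of $y^{n-3}$ comes only from $T_{U_{2,n-1}}$, where it equals $1=\binom{2}{2}$.

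Your route treats the $x$- and $y$-parts uniformly and avoids the polynomial identity in $x$. However, it imports two facts the paper never derives from its definition \eqref{eq:tutte-subset-expansion}: the deletion--contraction recurrence, and the closed form of $T_{U_{2,m}}$. The latter is easily justified by the same recurrence starting from $T_{U_{1,k}}=x+y+\cdots+y^{k-1}$, or directly from the subset expansion. The paper's computation is self-contained, at the cost of one extra algebraic identity.
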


\begin{proof}
We begin by proving the formula \eqref{eq:tutte-rankthree}.
Since $\mathcal{M}$ has neither loops nor parallel elements,
every singleton and every two-element subset is independent.
Together with the independence of the empty set, this yields
$\operatorname{rk}_{\mathcal{M}}(A)=|A|$
for every subset $A\subseteq[n]$ with $|A|\leq 2$.
On the other hand, the definition of the uniform matroid gives
$\operatorname{rk}_{U_{3,n}}(A)=\min\{|A|,3\}$.
Thus, if $|A|\leq 2$, then
$\operatorname{rk}_{\mathcal{M}}(A)
=\operatorname{rk}_{U_{3,n}}(A)=|A|$.

For $|A|\geq 3$, the set $A$ contains a two-element independent set,
so $\operatorname{rk}_{\mathcal{M}}(A)\geq 2$.
As $\mathcal{M}$ has rank $3$, it is also true that
$\operatorname{rk}_{\mathcal{M}}(A)\leq 3$.
Hence $\operatorname{rk}_{\mathcal{M}}(A)\in\{2,3\}$,
whereas $\operatorname{rk}_{U_{3,n}}(A)=3$.

Since the two matroids share the ground set $[n]$ and both have rank $3$,
we may subtract the corresponding terms indexed by the same subset
$A\subseteq[n]$ in the formula \eqref{eq:tutte-subset-expansion}.
By the rank comparison above, the corresponding terms are equal
and therefore cancel whenever $|A|\leq 2$ or
$\operatorname{rk}_{\mathcal{M}}(A)=3$.
The subsets that remain are exactly those satisfying $|A|\geq 3$
and $\operatorname{rk}_{\mathcal{M}}(A)=2$.
Each of these subsets has rank $3$ in $U_{3,n}$.
Thus,
\begin{equation}\label{eq:deffi}
\begin{aligned}
T_{\mathcal{M}}(x,y)-T_{U_{3,n}}(x,y)
&=\sum_{\substack{A\subseteq[n]\\|A|\geq 3,\ \operatorname{rk}_{\mathcal{M}}(A)=2}}
\left((x-1)(y-1)^{|A|-2}-(y-1)^{|A|-3}\right)\\
&=(xy-x-y)\sum_{\substack{A\subseteq[n]\\|A|\geq 3,\ \operatorname{rk}_{\mathcal{M}}(A)=2}}(y-1)^{|A|-3}.
\end{aligned}
\end{equation}

Let $A\subseteq[n]$ satisfy $|A|\geq 3$ and
$\operatorname{rk}_{\mathcal{M}}(A)=2$.
By the basic properties of closure, $\overline{A}$ is a flat and
$\operatorname{rk}_{\mathcal{M}}(\overline{A})
=\operatorname{rk}_{\mathcal{M}}(A)=2$.
Hence $\overline{A}\in L_2(\mathcal{M})$.
Conversely, let $X\in L_2(\mathcal{M})$ and let $A\subseteq X$
with $|A|\geq 3$.
Since $\mathcal{M}$ is simple, $A$ contains a two-element independent
set, so $\operatorname{rk}_{\mathcal{M}}(A)\geq 2$.
Because $A\subseteq X$, we also have
$\operatorname{rk}_{\mathcal{M}}(A)
\leq\operatorname{rk}_{\mathcal{M}}(X)=2$.
Therefore, $\operatorname{rk}_{\mathcal{M}}(A)=2$.
As $X$ is a flat containing $A$, the minimality of the closure gives
$\overline{A}\subseteq X$.
Moreover, closure preserves rank, so
$\operatorname{rk}_{\mathcal{M}}(\overline{A})
=\operatorname{rk}_{\mathcal{M}}(X)=2$.
If $\overline{A}\subsetneq X$, then, since both sets are flats,
we would have
$\operatorname{rk}_{\mathcal{M}}(\overline{A})
<\operatorname{rk}_{\mathcal{M}}(X)$,
contradicting the preceding formula.
Hence, $\overline{A}=X$.

It follows that the subsets of rank $2$ and cardinality at least $3$
can be partitioned into disjoint classes according to their closures.
For each $X\in L_2(\mathcal{M})$ and each integer $j$ with
$3\leq j\leq |X|$, the inclusion $A\subseteq\overline{A}$ and
the preceding argument show that the $j$-element subsets with closure
$X$ are precisely all $j$-element subsets of $X$.
There are $\binom{|X|}{j}$ such subsets.
Since $\mathcal{M}$ has rank $3$, every $X\in L_2(\mathcal{M})$
is a proper subset of $[n]$, and hence $|X|\leq n-1$.
Thus, grouping the terms in the formula \eqref{eq:deffi} first according
to their closures and then according to the cardinalities of
these flats, we obtain
\begin{equation*}
\begin{aligned}
T_{\mathcal{M}}(x,y)-T_{U_{3,n}}(x,y)&=(xy-x-y)\sum_{\substack{X\in L_2(\mathcal{M})\\|X|\geq 3}}\sum_{j=3}^{|X|}\binom{|X|}{j}(y-1)^{j-3}\\
&=(xy-x-y)\sum_{p=3}^{n-1}c_p(\mathcal{M})\sum_{j=3}^{p}\binom{p}{j}(y-1)^{j-3}.
\end{aligned}
\end{equation*}
Rearranging proves the formula \eqref{eq:tutte-rankthree}.

We now prove the formula \eqref{eq:uniform-three}.
For each integer $m\geq 3$, let
\begin{equation*}
Q_m(y)
=\sum_{j=3}^{m}\binom{m}{j}(y-1)^{j-3}.
\end{equation*}
Using the rank function of $U_{3,n}$ in
the formula \eqref{eq:tutte-subset-expansion} and grouping subsets according
to their cardinalities, we obtain
\begin{equation*}
T_{U_{3,n}}(x,y)
=(x-1)^3+n(x-1)^2+\binom{n}{2}(x-1)+Q_n(y).
\end{equation*}
Using the recurrence relation for binomial coefficients,
we obtain, for every integer $m\geq 3$,
\begin{equation*}
\begin{aligned}
Q_{m+1}(y)
&=Q_m(y)
  +\sum_{j=3}^{m+1}\binom{m}{j-1}(y-1)^{j-3}\\
&=Q_m(y)+\binom{m}{2}+(y-1)Q_m(y)\\
&=yQ_m(y)+\binom{m}{2}.
\end{aligned}
\end{equation*}
Since $Q_3(y)=1$, induction gives
\begin{equation*}
\begin{aligned}
Q_n(y)&=\sum_{j=0}^{n-3}\binom{n-j-1}{2}y^j\\
&=\binom{n-1}{2}+\sum_{j=1}^{n-3}\binom{n-j-1}{2}y^j.
\end{aligned}
\end{equation*}
Substitute this identity into the expression for $T_{U_{3,n}}(x,y)$.
Since
\begin{equation*}
\begin{aligned}
(x-1)^3+n(x-1)^2+\binom{n}{2}(x-1)+\binom{n-1}{2}=x^3+(n-3)x^2+\binom{n-2}{2}x,
\end{aligned}
\end{equation*}
we obtain
\begin{equation*}
T_{U_{3,n}}(x,y)
=x^3+(n-3)x^2+\binom{n-2}{2}x
+\sum_{j=1}^{n-3}\binom{n-j-1}{2}y^j.
\end{equation*}
This proves the formula \eqref{eq:uniform-three}.
\end{proof}

\section{Proof of Theorem~\ref{thm:main}}

We now prove the main theorem.

\begin{proof}[\rm\textbf{Proof of Theorem \ref{thm:main}}]
Let $\mathcal{N}$ be an arbitrary simple matroid such that
$\mathrm{OS}(\mathcal{M})$ and $\mathrm{OS}(\mathcal{N})$
are isomorphic as ungraded $\mathbb{C}$-algebras.
It suffices to prove that
$T_{\mathcal{M}}(x,y)=T_{\mathcal{N}}(x,y)$.

Let $[n]$ and $[m]$ be the ground sets of $\mathcal{M}$ and
$\mathcal{N}$, respectively.
By the definition of the Orlik--Solomon algebra, both algebras
are connected graded $\mathbb{C}$-algebras, each generated by
finitely many homogeneous elements of degree $1$.
Hence, by Lemma~\ref{lem:graded}, there exists a graded
$\mathbb{C}$-algebra isomorphism
$\varphi\colon\mathrm{OS}(\mathcal{M})\to\mathrm{OS}(\mathcal{N})$.

Since $\varphi$ is a graded $\mathbb{C}$-algebra isomorphism,
its restriction $\varphi|_{\mathrm{OS}_p(\mathcal{M})}$
is a linear isomorphism from $\mathrm{OS}_p(\mathcal{M})$
to $\mathrm{OS}_p(\mathcal{N})$ for every integer $p\geq 0$.
Thus, Lemma~\ref{lem:rank} implies that $n=m$ and that
$\mathcal{M}$ and $\mathcal{N}$ have the same rank.
Let $r$ denote their common rank.
Since $\mathcal{M}$ has rank at most $3$, we have $0\leq r\leq 3$.
We consider the following three cases.

\textbf{Case 1.} $r=0$.

Since both matroids have rank $0$, any element of either ground set
would form a dependent singleton and would therefore be a loop,
contrary to simplicity.
Thus, $n=m=0$, and both matroids are the empty matroid.
By the subset expansion of the Tutte polynomial
in the formula \eqref{eq:tutte-subset-expansion}, we obtain
$T_{\mathcal{M}}(x,y)=T_{\mathcal{N}}(x,y)=1$.

\textbf{Case 2.} $1\leq r\leq 2$.

Since both matroids are simple, every singleton and every
two-element subset is independent in each of them.
Together with the independence of the empty set and the inequalities
$1\leq r\leq 2$, this implies that every subset of cardinality
at most $r$ is independent in both matroids.
On the other hand, by the definition of rank, every subset
of cardinality greater than $r$ is dependent.
Thus, the independent sets of both matroids are precisely
the subsets of $[n]$ of cardinality at most $r$.
Consequently, both matroids are the uniform matroid $U_{r,n}$,
and hence
$T_{\mathcal{M}}(x,y)=T_{U_{r,n}}(x,y)=T_{\mathcal{N}}(x,y)$.

\textbf{Case 3.} $r=3$.

By Lemma~\ref{lem:recovery} and the graded isomorphism $\varphi$,
we have $c_p(\mathcal{M})=c_p(\mathcal{N})$
for every integer $p\geq 2$.
Both matroids are simple matroids of rank $3$ on $n$ elements.
Applying Lemma~\ref{lem:Tutte} to both matroids and
subtracting the resulting formulas, we obtain
\begin{equation*}
T_{\mathcal{M}}(x,y)-T_{\mathcal{N}}(x,y)=(xy-x-y)\sum_{p=3}^{n-1}\bigl(c_p(\mathcal{M})-c_p(\mathcal{N})\bigr)\sum_{j=3}^{p}\binom{p}{j}(y-1)^{j-3}=0.
\end{equation*}
Therefore, $T_{\mathcal{M}}(x,y)=T_{\mathcal{N}}(x,y)$.

This completes the proof.
\end{proof}

Theorem~\ref{thm:main} gives the following two corollaries.

\begin{corollary}\label{cor:Falk}
Let $\mathcal{M}$ be a simple matroid of rank three. Then
$\mathrm{OS}(\mathcal{M})$ determines $T_{\mathcal{M}}(x,y)$.
\end{corollary}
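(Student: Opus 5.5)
This corollary is the special case $r=3$ of Theorem~\ref{thm:main}, so the plan is a direct specialization. Let $\mathcal{M}$ be a simple matroid of rank three. Let $\mathcal{N}$ be any simple matroid with $\mathrm{OS}(\mathcal{M})\cong\mathrm{OS}(\mathcal{N})$ as ungraded $\mathbb{C}$-algebras. We must show $T_{\mathcal{M}}(x,y)=T_{\mathcal{N}}(x,y)$. Since a rank-three matroid has rank at most three, Theorem~\ref{thm:main} applies verbatim and gives the conclusion.

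To make the dependence transparent, the steps specialize as follows. First, both Orlik--Solomon algebras are connected graded algebras generated in degree $1$ with finite-dimensional degree-one part. Hence Lemma~\ref{lem:graded} upgrades the ungraded isomorphism to a graded isomorphism $\varphi$. Second, Lemma~\ref{lem:rank} yields that $\mathcal{N}$ has the same number $n$ of elements and the same rank $3$. Third, Lemma~\ref{lem:recovery} gives $c_p(\mathcal{M})=c_p(\mathcal{N})$ for all $p\ge 2$. Finally, the reconstruction formula \eqref{eq:tutte-rankthree} of Lemma~\ref{lem:Tutte} expresses the Tutte polynomial purely in terms of $n$ and the numbers $c_p$, so the two Tutte polynomials coincide.

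There is no genuine obstacle here; the only point to check is that $\mathcal{N}$ is not assumed a priori to have rank three. That is exactly what Lemma~\ref{lem:rank} supplies once the isomorphism has been made graded. The substantive work sits in Lemma~\ref{lem:filtration}, where $F_d(\mathrm{OS})$ recovers the flats of size at least $d+1$, and that lemma is already established.
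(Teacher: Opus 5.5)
Your proposal is correct and matches the paper, which records this corollary as the $r=3$ case of Theorem~\ref{thm:main}; your unpacking follows Case~3 of that proof step for step (Lemma~\ref{lem:graded} to make the isomorphism graded, Lemma~\ref{lem:rank} for equal $n$ and rank, Lemma~\ref{lem:recovery} for $c_p(\mathcal{M})=c_p(\mathcal{N})$, and Lemma~\ref{lem:Tutte}). The one caveat comes from the paper rather than from your reduction: the argument ultimately rests on Lemma~\ref{lem:resonance-structure}, which the paper applies to arbitrary simple rank-three matroids (including the unknown $\mathcal{N}$), whereas its cited sources treat complex line arrangements, so ``already established'' holds only modulo that input.
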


\begin{corollary}(\cite{CordovilForge2003})\label{cor:real-realizable}
Let $\mathcal{M}$ be a simple matroid of rank at most three that is realizable
over $\mathbb{R}$. Then $\mathrm{OS}(\mathcal{M})$ determines
$T_{\mathcal{M}}(x,y)$.
\end{corollary}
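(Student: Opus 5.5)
The plan is to deduce Corollary~\ref{cor:real-realizable} directly from Theorem~\ref{thm:main}, by observing that realizability over $\mathbb{R}$ is an extra hypothesis that plays no role in the argument. Let $\mathcal{M}$ be a simple matroid of rank at most three that is realizable over $\mathbb{R}$. Forgetting the realization, $\mathcal{M}$ is in particular a simple matroid of rank at most three, so it falls under the hypotheses of Theorem~\ref{thm:main}.

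The one point to check is that the notion ``$\mathrm{OS}(\mathcal{M})$ determines $T_{\mathcal{M}}(x,y)$'' in the corollary is the same as in the theorem. In both, the comparison matroid $\mathcal{N}$ ranges over all finite simple matroids, realizable or not, and an ungraded $\mathbb{C}$-algebra isomorphism $\mathrm{OS}(\mathcal{M})\cong\mathrm{OS}(\mathcal{N})$ is required to force $T_{\mathcal{M}}(x,y)=T_{\mathcal{N}}(x,y)$. In particular, I will not restrict $\mathcal{N}$ to real-realizable matroids, so the conclusion is at least as strong as the statement of Cordovil and Forge.

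Concretely, take an arbitrary simple $\mathcal{N}$ with $\mathrm{OS}(\mathcal{M})\cong\mathrm{OS}(\mathcal{N})$. Theorem~\ref{thm:main} applied to $\mathcal{M}$ gives $T_{\mathcal{M}}(x,y)=T_{\mathcal{N}}(x,y)$, which is exactly the claim. Unwinding the theorem, the ingredients used are:
\begin{enumerate}
\item[\textup{(a)}] Lemma~\ref{lem:graded}, to pass from an ungraded to a graded isomorphism;
\item[\textup{(b)}] Lemma~\ref{lem:rank}, to match the number of elements $n$ and the rank $r$;
\item[\textup{(c)}] in rank $3$, Lemma~\ref{lem:recovery}, to match the flat counts $c_p$;
\item[\textup{(d)}] Lemma~\ref{lem:Tutte}, to express the Tutte polynomial in terms of $n$ and the $c_p$.
\end{enumerate}
None of these uses realizability.

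There is no genuine obstacle here. The only care needed is the quantifier over $\mathcal{N}$ just discussed.
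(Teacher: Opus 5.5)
Your proposal is correct and is exactly the paper's argument: the corollary is stated as an immediate consequence of Theorem~\ref{thm:main}, obtained by discarding the realizability hypothesis while $\mathcal{N}$ still ranges over all simple matroids. Your remark that no ingredient uses realizability holds only because the paper states Lemma~\ref{lem:resonance-structure} for arbitrary simple rank-$3$ matroids. That lemma is applied to $\mathcal{N}$ inside Lemma~\ref{lem:recovery} even when $\mathcal{M}$ is real-realizable.
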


\section{Conclusion}
We have established that the Tutte polynomial of every simple matroid with rank at most three
is determined by its Orlik--Solomon algebra.
This settles a conjecture of Falk~\cite{Falk1997} and extends the result of
Cordovil and Forge from simple matroids of rank at most three that are
realizable over $\mathbb{R}$ to all simple matroids of rank at most three.
We have also proved that the Orlik--Solomon algebra of a finite simple
rank-three matroid determines the number of rank-two flats of each
cardinality.
This pinpoints combinatorial information preserved by the algebra and points to
further investigation of the combinatorial invariants of higher-rank matroids that
can be recovered from their Orlik--Solomon algebras.


\begin{thebibliography}{99}



\bibitem{BellZhang2017}
J. Bell, J.J. Zhang, An isomorphism lemma for graded rings, Proc. Amer. Math. Soc. 145 (2017) 989--994.

\bibitem{BergetEurSpinkTseng2023}
A. Berget, C. Eur, H. Spink, D. Tseng, Tautological classes of matroids, Invent. Math. 233 (2023) 951--1039.

\bibitem{BernardiKalmanPostnikov2022}
O. Bernardi, T. K\'alm\'an, A. Postnikov, Universal Tutte polynomial, Adv. Math. 402 (2022) 108355.

\bibitem{BrandenMoci2014}
P. Br\"and\'en, L. Moci, The multivariate arithmetic Tutte polynomial, Trans. Amer. Math. Soc. 366 (2014) 5523--5540.

\bibitem{BrylawskiOxley1992}
T. Brylawski, J. Oxley, The Tutte polynomial and its applications, in: N. White (Ed.), Matroid Applications,
Encyclopedia of Mathematics and its Applications, Vol. 40, Cambridge University Press, Cambridge, 1992, pp. 123--225.

\bibitem{ColbournProvanVertigan1995}
C.J. Colbourn, J.S. Provan, D. Vertigan, The complexity of computing the Tutte polynomial on transversal matroids, Combinatorica 15 (1995) 1--10.

\bibitem{CordovilForge2003}
R. Cordovil, D. Forge, A note on Tutte polynomials and Orlik--Solomon algebras, European J. Combin. 24 (2003) 1081--1087.

\bibitem{EschenbrennerFalk1999}
C.J. Eschenbrenner, M.J. Falk, Orlik--Solomon algebras and Tutte polynomials, J. Algebraic Combin. 10 (1999) 189--199.

\bibitem{Falk1993}
M. Falk, Homotopy types of line arrangements, Invent. Math. 111 (1993) 139--150.

\bibitem{Falk1997}
M. Falk, Arrangements and cohomology, Ann. Comb. 1 (1997) 135--157.

\bibitem{Falk2001}
M. Falk, Combinatorial and algebraic structure in Orlik--Solomon algebras, European J. Combin. 22 (2001) 687--698.

\bibitem{FalkYuzvinsky2007}
M. Falk, S. Yuzvinsky, Multinets, resonance varieties, and pencils of plane curves, Compos. Math. 143 (2007) 1069--1088.

\bibitem{KookReinerStanton1999}
W. Kook, V. Reiner, D. Stanton, A convolution formula for the Tutte polynomial, J. Combin. Theory Ser. B 76 (1999) 297--300.

\bibitem{LibgoberYuzvinsky2000}
A. Libgober, S. Yuzvinsky, Cohomology of the Orlik--Solomon algebras and local systems, Compos. Math. 121 (2000) 337--361.

\bibitem{OrlikSolomon1980}
P. Orlik, L. Solomon, Combinatorics and topology of complements of hyperplanes, Invent. Math. 56 (1980) 167--189.


\bibitem{Oxley2011}
J. Oxley, Matroid Theory, second ed., Oxford University Press, Oxford, 2011.

\bibitem{SheltonYuzvinsky1997}
B. Shelton, S. Yuzvinsky, Koszul algebras from graphs and hyperplane arrangements, J. London Math. Soc. (2) 56 (1997) 477--490.

\bibitem{Yuzvinsky2001}
S. Yuzvinsky, Orlik--Solomon algebras in algebra and topology, Russian Math. Surveys 56 (2) (2001) 293--364.

\end{thebibliography}
\end{document}